\tikzset{tab/.style={matrix of math nodes,column sep=-.35, row sep=-.35,text height=7pt,text width=7pt,align=center,inner sep=2,font=\footnotesize}}
\tikzset{dynkdot/.style={circle,draw,scale=.45}}
\tikzset{dot/.style={circle,draw,fill,scale=.45}}
\newcommand{\tikzmark}[2]{\tikz[overlay,remember picture,baseline] \node [anchor=base] (#1) {$#2$};}
\def\@tocline#1#2#3#4#5#6#7{\relax
	\ifnum #1>\c@tocdepth % then omit
	\else
	\par \addpenalty\@secpenalty\addvspace{#2}%
	\begingroup \hyphenpenalty\@M
	\@ifempty{#4}{%
		\@tempdima\csname r@tocindent\number#1\endcsname\relax
	}{%
	\@tempdima#4\relax
}%
\parindent\z@ \leftskip#3\relax \advance\leftskip\@tempdima\relax
\rightskip\@pnumwidth plus4em \parfillskip-\@pnumwidth
#5\leavevmode\hskip-\@tempdima
\ifcase #1
\or\or \hskip 1em \or \hskip 2em \else \hskip 3em \fi%
#6\nobreak\relax
\dotfill\hbox to\@pnumwidth{\@tocpagenum{#7}}\par
\nobreak
\endgroup
\fi}
\newcommand{\mybox}[1]{
 \begin{tikzpicture}[baseline=4,scale=.5]
  \draw[-] (0,0) -- (1,0) -- (1,1) -- (0,1) -- cycle;
  \node[font=\footnotesize] at (.5,.5) {$#1$};
 \end{tikzpicture}}
\newcommand{\mylongbox}[1]{
 \begin{tikzpicture}[baseline=4,scale=.5]
  \draw[-] (0,0) -- (2,0) -- (2,1) -- (0,1) -- cycle;
  \node[font=\footnotesize] at (1,.5) {$#1$};
 \end{tikzpicture}}
\newcommand{\rw}[1]{
	\mathrm{read_{#1}}
	}
\newcommand{\arxiv}[1]{\href{http://arxiv.org/abs/#1}{\tt arXiv:\nolinkurl{#1}}}
\newcommand{\BR}{\mathrm{br}}
\newcommand{\bz}{\mathbf{Z}}
\newcommand{\g}{\mathfrak{g}}
\newcommand{\Kp}{\mathrm{Kp}}
\newcommand{\mdots}{\raisebox{1.5ex}{$\vdots$}}
\newcommand{\R}{\mathcal{R}}
\newcommand{\stack}[1]{\begin{smallmatrix}#1\end{smallmatrix}}
\newcommand{\TT}{\mathcal{T}}
\newcommand{\wt}{{\rm wt}}
\numberwithin{equation}{section}
\numberwithin{figure}{section}
\numberwithin{table}{section}
\newtheorem{Theorem}[equation]{Theorem}
\newtheorem{Proposition}[equation]{Proposition} 
\newtheorem{Lemma}[equation]{Lemma}
\theoremstyle{definition}
\newtheorem{Definition}[equation]{Definition}
\newtheorem{Example}[equation]{Example}
\newtheorem{Remark}[equation]{Remark}
\begin{document}
\title[PBW bases and tableaux]{PBW bases and marginally large tableaux in types B and C}

\author{Jackson Criswell}
\address{Department of Mathematics, Central Michigan University, Mount Pleasant, MI}
\email{crisw1ja@cmich.edu}
\author{Ben Salisbury}
\thanks{B.S.\ was partially supported by Simons Foundation grant \#429950}
\address{Department of Mathematics, Central Michigan University, Mount Pleasant, MI}
\email{ben.salisbury@cmich.edu}
\urladdr{http://people.cst.cmich.edu/salis1bt/}
\author{Peter Tingley}
\thanks{P.T.\ was partially supported by NSF grant DMS-1265555}
\address{Department of Mathematics and Statistics, Loyola University, Chicago, IL}
\email{ptingley@luc.edu}
\urladdr{http://webpages.math.luc.edu/~ptingley/}
\subjclass[2010]{05E10,17B37}

\maketitle

\begin{abstract} 

%We explicitly describe the isomorphism between two combinatorial realizations of Kashiwara's infinity crystal $B(\infty)$ in types $B_n$ and $C_n$. The first realization is in terms of marginally large tableaux and the other is in terms of Kostant partitions coming from PBW bases. We also discuss a stack notation for Kostant partitions which simplifies that realization. 

We explicitly describe the isomorphism between two combinatorial realizations of Kashiwara's infinity crystal in types B and C. The first realization is in terms of marginally large tableaux and the other is in terms of Kostant partitions coming from PBW bases. We also discuss a stack notation for Kostant partitions which simplifies that realization. 

\end{abstract}

\setcounter{tocdepth}{1} % Only show sections and higher
\tableofcontents

\section{Introduction}\label{sec:intro} 
The infinity crystal $B(\infty)$ is a combinatorial object associated with a symmetrizable Kac--Moody algebra $\g$.  It contains information about the integrable highest weight representations of $\g$ and the associate quantum group $U_q(\g)$. Kashiwara's original description of $B(\infty)$ used a complicated algebraic construction, but there are often simple combinatorial realizations.  Here we consider two such realizations in types $B_n$ and $C_n$.  The first is the marginally large tableaux construction of \cite{C98,HL08}. The second uses the Kostant partitions from \cite{SST1}, which are related to Lusztig's PBW bases \cite{L10} (see also \cite{Tin}).  In \cite{CT15} and \cite{SST2}, isomorphisms between these two realizations are studied in types $A_n$ and $D_n$, respectively. 
Our main result is a simple description of the unique isomorphism between these two realizations of $B(\infty)$ for types $B_n$ and $C_n$. We also give a stack notation for Kostant partitions of these types motivated by the connection to multisegments in type $A_n$ described in \cite{CT15}.

\section{Background}
Let $\g$ be a Lie algebra of type $B_n$ or $C_n$. The Cartan matrix and Dynkin diagram are
\[
B_n:  (a_{ij}) =
\left(\begin{smallmatrix}
2 &-1 & 0 & \cdots & 0 & 0 & 0 \\
-1& 2 &-1 & \cdots & 0 & 0 & 0 \\
0 &-1 & 2 & \cdots & 0 & 0 & 0 \\
& & & \ddots & & & \\
0 & 0 & 0 & \cdots & 2 &-1 &0 \\
0 & 0 & 0 & \cdots & -1& 2 & -1 \\
0 & 0 & 0 & \cdots & 0& -2 & 2 \\ 
\end{smallmatrix}
\right), 
\qquad
C_n:  (a_{ij}) =
\left(\begin{smallmatrix}
2 &-1 & 0 & \cdots & 0 & 0 & 0 \\
-1& 2 &-1 & \cdots & 0 & 0 & 0 \\
0 &-1 & 2 & \cdots & 0 & 0 & 0 \\
& & & \ddots & & & \\
0 & 0 & 0 & \cdots & 2 &-1 &0 \\
0 & 0 & 0 & \cdots & -1& 2 & -2 \\
0 & 0 & 0 & \cdots & 0& -1 & 2 \\ 
\end{smallmatrix}
\right)
\]

\[
B_n: \ 
\begin{tikzpicture}[baseline=-5,scale=1,font=\scriptsize]
\foreach \x in {1,2,4,5}
{\node[dynkdot] (\x) at (\x,0) {};}
\node[label={below:$\alpha_1$}] at (1,0) {}; 
\node[label={below:$\alpha_2$}] at (2,0) {}; 
\node[label={below:$\alpha_{n-1}$}] at (4,0) {}; 
\node[label={below:$\alpha_n$}] at (5,0) {}; 
\node at (3,0) {$\cdots$};
\draw[-] (1.east) -- (2.west);
\draw[-] (2.east) -- (2.75,0);
\draw[-] (3.25,0) -- (4.west);
\draw[-] (4.30) -- (5.150);
\draw[-] (4.330) -- (5.210);
\draw[-] (4.55,0) -- (4.45,.1);
\draw[-] (4.55,0) -- (4.45,-.1);
\end{tikzpicture}
\qquad 
C_n: \
\begin{tikzpicture}[baseline=-5,scale=1,font=\scriptsize]
\foreach \x in {1,2,4,5}
{\node[dynkdot] (\x) at (\x,0) {};}
\node[label={below:$\alpha_1$}] at (1,0) {}; 
\node[label={below:$\alpha_2$}] at (2,0) {}; 
\node[label={below:$\alpha_{n-1}$}] at (4,0) {}; 
\node[label={below:$\alpha_n.$}] at (5,0) {}; 
\node at (3,0) {$\cdots$};
\draw[-] (1.east) -- (2.west);
\draw[-] (2.east) -- (2.75,0);
\draw[-] (3.25,0) -- (4.west);
\draw[-] (4.30) -- (5.150);
\draw[-] (4.330) -- (5.210);
\draw[-] (4.45,0) -- (4.55,.1);
\draw[-] (4.45,0) -- (4.55,-.1);
\end{tikzpicture}
\]

\begin{table}[t]
	\renewcommand{\arraystretch}{1.2}
	\[
	\begin{array}{cl}\toprule
	\beta_{i,k}= \alpha_i + \cdots + \alpha_{k}, & 1\le i\le k \le n \\
	\gamma_{i,k}=\alpha_i + \cdots + \alpha_{k-1}+ 2\alpha_{k} + 2\alpha_{k+1}+ \cdots + 2\alpha_n, & 1\le i < k \le n\\ \midrule
	
	\beta_{i,k}= \varepsilon_i-\varepsilon_{k+1}, & 1\le i\le k \le n-1 \\
	\beta_{i,n}= \varepsilon_i, &1\le i \le n\\
	\gamma_{i,k}= \varepsilon_i+\varepsilon_k, & 1\le i < k \le n\\\bottomrule
	\end{array}
	\]
	\caption{Positive roots of type $B_n$, expressed both as a linear combination of simple roots and in the canonical realization following \cite{bourbaki}.}
	\label{posrootsB}
\end{table}
\begin{table}[t] 
	\renewcommand{\arraystretch}{1.2}
	\[
	\begin{array}{cl}\toprule
	\beta_{i,k}= \alpha_i + \cdots + \alpha_{k}, & 1\le i\le k < n \\
	\gamma_{i,k}=\alpha_i + \cdots + \alpha_{n-1}+ \alpha_{n} + \alpha_{n-1}+ \cdots + \alpha_k, & 1\le i \le k \le n\\ \midrule
	
	\beta_{i,k}= \varepsilon_i-\varepsilon_{k+1}, & 1\le i\le k < n \\
	\gamma_{i,k}= \varepsilon_i+\varepsilon_k, & 1\le i \le k \le n\\\bottomrule
	\end{array}
	\]
	\caption{Positive roots of type $C_n$, expressed both as a linear combination of simple roots and in the canonical realization following \cite{bourbaki}.}
	\label{posrootsC}
\end{table}

Let $\{\alpha_1,\dots,\alpha_n\}$ be the simple roots and $\{\alpha_1^\vee,\dots,\alpha_n^\vee\}$ the simple coroots, related by the inner product $\langle\alpha_j^\vee,\alpha_i\rangle = a_{ij}$.  Define the fundamental weights $\{\omega_1,\dots,\omega_n\}$ by $\langle\alpha_i^\vee,\omega_j\rangle = \delta_{ij}$.  Then the weight lattice is $P = \bz\omega_1 \oplus \cdots \oplus \bz\omega_n$ and the coroot lattice is $P^\vee = \bz\alpha_1^\vee \oplus \cdots \oplus \bz\alpha_n^\vee$.  
Let $\Phi$ denote the roots associated to $\g$, with the set of positive roots denoted $\Phi^+$.  The list of positive roots in type $B_n$ is given in Table \ref{posrootsB}, and the list of positive roots in type $C_n$ is given in Table \ref{posrootsC}.  The Weyl group associated to $\g$ is the group generated by $s_1,\dots,s_n$, where $s_i(\lambda) = \lambda - \langle\alpha_i^\vee,\lambda\rangle\alpha_i$ for all $\lambda \in P$.  There exists a unique longest element of $W$ which is denoted as $w_0$.  For notational brevity, set $I = \{1,2,\dots,n\}$.

Let $B(\infty)$ be the infinity crystal associated to $\g$ as defined in \cite{K91}. This is a countable set along with operators $e_i$ and $f_i$, which roughly correspond to the Chevalley generators of $\g$. 
Here we use two explicit realizations  of $B(\infty)$ but do not need the general definition.

\subsection{Crystal of marginally large tableaux}\label{sec:MLT}
Recall the fundamental crystals given below. 
\begin{equation} \label{fundBC}
\begin{aligned}
B_n:  & \quad
\begin{tikzpicture}[baseline=-4,xscale=1.25]
	\node (1) at (0,0) {$\mybox1$};
	\node (d1) at (1.5,0) {$\cdots$};
	\node (n) at (3,0) {$\mybox{n}$};
	\node (z) at (4.5,0) {$\mybox{0}$};
	\node (bn) at (6,0) {$\mybox{\overline{n}}$};
	\node (d2) at (7.5,0) {$\cdots$};
	\node (b1) at (9,0) {$\mybox{\overline{1}}$};
	\draw[->] (1) to node[above]{\tiny$1$} (d1);
	\draw[->] (d1) to node[above]{\tiny$n-1$} (n);
	\draw[->] (n) to node[above]{\tiny$n$} (z);
	\draw[->] (z) to node[above]{\tiny$n$} (bn);
	\draw[->] (bn) to node[above]{\tiny$n-1$} (d2);
	\draw[->] (d2) to node[above]{\tiny$1$} (b1);
\end{tikzpicture} \\
C_n: &  \quad
\begin{tikzpicture}[baseline=-4,xscale=1.25]
	\node (1) at (0,0) {$\mybox1$};
	\node (d1) at (1.5,0) {$\cdots$};
	\node (n) at (3,0) {$\mybox{n}$};
	\node (bn) at (4.5,0) {$\mybox{\overline{n}}$};
	\node (d2) at (6,0) {$\cdots$};
	\node (b1) at (7.5,0) {$\mybox{\overline{1}}$};
	\draw[->] (1) to node[above]{\tiny$1$} (d1);
	\draw[->] (d1) to node[above]{\tiny$n-1$} (n);
	\draw[->] (n) to node[above]{\tiny$n$} (bn);
	\draw[->] (bn) to node[above]{\tiny$n-1$} (d2);
	\draw[->] (d2) to node[above]{\tiny$1$} (b1);
\end{tikzpicture}
\end{aligned}
\end{equation}
Define alphabets, denoted $J(B_n)$ and $J(C_n)$, to be the elements of these crystals with the natural orderings
\begin{align*}
J(B_n) &: \ \  \left\{ 1 \prec \cdots \prec n-1 \prec  n \prec 0 \prec \overline n  \prec \overline{n-1} \prec \cdots \prec \overline 1\right\}\text{, and}\\
J(C_n) &: \ \ \left\{ 1 \prec \cdots \prec n-1 \prec  n \prec \overline n  \prec \overline{n-1} \prec \cdots \prec \overline 1\right\}.
\end{align*}

\begin{Definition} \label{defMLT}
The set of marginally large tableaux, $\TT(\infty)$, is the set of semistandard Young tableaux $T$ with entries in $J(B_n)$ or $J(C_n)$ which satisfy the following conditions.
\begin{enumerate}
 \item The number of $\mybox{i}$ in the $i$-th row of $T$ is exactly one more than the total number of boxes in the $(i+1)$-th row.   
 \item Entries weakly increase along rows.
 \item All entries in the $i$-th row are $\preceq \overline{\imath}$.
 \item If $T$ is of type $B_n$, then the $\mybox{0}$ does not appear more than once per row. 
\end{enumerate}
\end{Definition}

Definition \ref{defMLT} implies that the leftmost column of $T$ contains $\mybox{1}\,$, $\mybox{2}\,$, \dots, $\mylongbox{n-1}\,$, $\mybox{n}$ in increasing order from top to bottom.  We call the $\mybox{i}$ in row $i$ \textit{shaded boxes}. The number of shaded boxes in each row is one more than the total number of boxes in the next row.  

\begin{Example} \label{ex:tableauxB}
In type $B_3$, each $T\in\TT(\infty)$ has the form
\[
T = 
\begin{tikzpicture}[baseline,font=\footnotesize]
\matrix [matrix of math nodes,column sep=-.4, row sep=-.5,text height=9pt,align=center,inner sep=3] 
	{
		\node[draw,fill=gray!30]{$1$}; & 
		\node[draw,fill=gray!30]{$1$}; & 
		\node[draw,fill=gray!30]{$1 \cdots 1$}; & 
		\node[draw,fill=gray!30]{$1$}; & 
		\node[draw,fill=gray!30]{$1\cdots1$}; & 
		\node[draw,fill=gray!30]{$1$}; & 
		\node[draw,fill=gray!30]{$1\cdots1$}; & 
		\node[draw,fill=gray!30]{$1\cdots1$}; & 
		\node[draw,fill=gray!30]{$1$}; & 
		\node[draw]{$2\cdots 2$}; & 
		\node[draw]{$3\cdots 3$}; & 
		\node[draw]{$0$}; & 
		\node[draw]{$\overline 3 \cdots \overline 3$}; & 
		\node[draw]{$\overline 2\cdots \overline 2$}; & 
		\node[draw]{$\overline 1 \cdots \overline 1$};\\
		\node[draw,fill=gray!30]{$2$}; & 
		\node[draw,fill=gray!30]{$2$}; & 
		\node[draw,fill=gray!30]{$2\cdots 2$}; & 
		\node[draw,fill=gray!30]{$2$}; & 
		\node[draw]{$3 \cdots 3$}; & 
		\node[draw]{$0$}; & 
		\node[draw]{$\overline 3\cdots \overline 3$}; &
		\node[draw]{$\overline 2\cdots \overline 2$}; \\
		\node[draw,fill=gray!30]{$3$}; & 
		\node[draw]{$0$}; & 
		\node[draw]{$\overline 3 \cdots \overline 3$}; \\
	};
\end{tikzpicture}.
\]
The notation $\mylongbox{i\cdots i}$ indicates any number of $\mybox{i}$ (possibly zero).  Also, the $\mybox{0}$ in each row may or may not be present.     
\end{Example}

%
%\begin{Example} \label{ex:tableauxC}
%In type $C_3$, the elements of $\TT(\infty)$ all have the form
%\[
%T = 
%\begin{tikzpicture}[baseline,font=\footnotesize]
%\matrix [matrix of math nodes,column sep=-.4, row sep=-.5,text height=9pt,align=center,inner sep=3] 
%	{
%		\node[draw,fill=gray!30]{$1$}; & 
%		\node[draw,fill=gray!30]{$1 \cdots 1$}; & 
%		\node[draw,fill=gray!30]{$1$}; & 
%		\node[draw,fill=gray!30]{$1\cdots1$}; & 
%		\node[draw,fill=gray!30]{$1\cdots1$}; & 
%		\node[draw,fill=gray!30]{$1\cdots1$}; & 
%		\node[draw,fill=gray!30]{$1$}; & 
%		\node[draw]{$2\cdots 2$}; & 
%		\node[draw]{$3\cdots 3$}; & 
%		\node[draw]{$\overline 3 \cdots \overline 3$}; & 
%		\node[draw]{$\overline 2\cdots \overline 2$}; & 
%		\node[draw]{$\overline 1 \cdots \overline 1$};\\
%		\node[draw,fill=gray!30]{$2$}; & 
%		\node[draw,fill=gray!30]{$2\cdots 2$}; & 
%		\node[draw,fill=gray!30]{$2$}; & 
%		\node[draw]{$3 \cdots 3$}; & 
%		\node[draw]{$\overline 3\cdots \overline 3$}; &
%		\node[draw]{$\overline 2\cdots \overline 2$}; \\
%		\node[draw,fill=gray!30]{$3$}; & 
%		\node[draw]{$\overline 3 \cdots \overline 3$}; \\
%	};
%\end{tikzpicture}.
%\]
%\end{Example}

\begin{Definition}\label{def:Tweight}
Fix $T\in\TT(\infty)$ for $1 \leq j \leq n-1$ and $k \succ j \in J$. Let $\mybox{k}_j$ denote a box containing $k$ in row $j$ of $T$.  Define the weight of the box by:
 %define the weight of each box as  
\begin{align*}
\text{Type } B_n: \quad \wt\left(\mybox{k}_j\right) &= 
\left\{ \begin{array}{cl}
-\beta_{j,k-1} & \text{if $k\neq0$},\\
-\beta_{j,n} & \text{if }k=0,
\end{array} \right. &
\wt\left(\mybox{\overline{k}}_j\right) &= 
\left\{ \begin{array}{cl}
-\gamma_{j,k} & \text{if $k\neq j$},\\
-2\beta_{j,n} & \text{if $k = j$.}
\end{array} \right.
\end{align*}
%In type $C_n$, %define the weight of each box as
\begin{align*}
\text{Type } C_n: \quad \wt\left(\mybox{k}_j\right)&=-\beta_{j,k-1}, &
\wt\left(\mybox{\overline{k}}_j\right)&=-\gamma_{j,k}.
\end{align*}
Define the weight $\wt(T)$ of $T$ to be the sum of the weights of all the unshaded boxes of $T$.
\end{Definition}

Note that the unique element of weight zero, denoted $T_\infty$, is the tableau where all boxes are shaded. For example, in types $B_3$ and $C_3$,

\[
T_\infty = \begin{tikzpicture}[baseline]
\matrix [tab] 
{
	\node[draw,fill=gray!30]{1}; & 
	\node[draw,fill=gray!30]{1}; & 
	\node[draw,fill=gray!30]{1}; \\
	\node[draw,fill=gray!30]{2}; & 
	\node[draw,fill=gray!30]{2}; \\
	\node[draw,fill=gray!30]{3}; \\
};
\end{tikzpicture}.
\]

\begin{Definition}
Let $T\in \TT(\infty)$.
\begin{enumerate}
 \item The \textit{Far-Eastern reading} of $T$, denoted $\rw{FE}(T)$, records the entries of the boxes in the columns of $T$ from top to bottom and proceeding from right to left.
 \item The \textit{Middle-Eastern reading} of $T$, denoted $\rw{ME}(T)$, records the entries of the boxes in the rows of $T$ from right to left and proceeding from top to bottom. 
\end{enumerate}
\label{def:T_MEFEread}
\end{Definition}

\begin{Definition}\label{def:Tisignature}
Let $T\in \TT(\infty)$ of type $B_n$ or $C_n$, and set $\rw{}(T) = \rw{ME}(T)$ or $\rw{FE}(T)$.  Consider the fundamental crystals from \eqref{fundBC}.  For each $i\in I=\{1,2, \dots ,n \}$, the bracketing sequence $\BR_i(T)$ is obtained by replacing each letter in $\rw{}(T)$ with $)^p(^q$, where $p$ is number of consecutive $i$-arrows entering and $q$ is the number of consecutive $i$-arrows leaving the corresponding box in the fundamental crystal. 

After determining $\BR_i(T)$, sequentially cancel all ()-pairs to obtain a sequence of the form $)\cdots)(\cdots($ called the \textit{$i$-signature} of $T$. The $i$-signature is denoted as $\BR_i^c(T)$.
\end{Definition}

\begin{Definition}\label{def:T-ops}
Let $T\in \TT(\infty)$ and $i\in I$.  Define $\boldsymbol{0}$ as a formal object not in $\TT(\infty)$.
\begin{enumerate}
 \item If there is no `$)$' in $\BR_i^c(T)$ then set $e_iT=\boldsymbol{0}$.  Otherwise let $\mybox{r}$ be the box in $T$ corresponding to the rightmost `$)$' in $\BR_i^c(T)$.  Define $e_iT$ to be the tableau obtained from $T$ by replacing the $r$ in $\mybox{r}$ with the predecessor in the alphabet of $\TT(\infty)$. If this creates a column with exactly the entries $1,2,\ldots ,i$, then delete that column. 
 \item Let $\mybox{\ell}$ be the box in $T$ corresponding to leftmost `$($' in $\BR_i^c(T)$.   Define $f_iT$ to be the tableau obtained from $T$ by replacing the $\ell$ in $\mybox{\ell}$ with the successor of $\ell$ in the alphabet of $\TT(\infty)$.  If $\mybox{\ell}$ occurs in row $i$ and $\ell=i$, then also insert a column with the entries $1,2,\dots,i$ directly to the left of $\mybox{\ell}$. 
 \end{enumerate}
\end{Definition}

\begin{Example} \label{ex:BefAction}
	Let $T\in \TT(\infty)$ for $\g$ of type $B_3$ where
	\[
	T = \begin{tikzpicture}[baseline]
	\matrix [tab] 
	{
		\node[draw,fill=gray!30]{1}; & 
		\node[draw,fill=gray!30]{1}; & 
		\node[draw,fill=gray!30]{1}; & 
		\node[draw,fill=gray!30]{1}; & 
		\node[draw,fill=gray!30]{1}; & 
		\node[draw,fill=gray!30]{1}; & 
		\node[draw,fill=gray!30]{1}; & 
		\node[draw,fill=gray!30]{1}; & 
		\node[draw,fill=gray!30]{1}; & 
		\node[draw]{2}; & 
		\node[draw]{0}; &
		\node[draw]{\overline 3}; &
		\node[draw]{\overline 2}; & 
		\node[draw]{\overline 1}; & 
		\node[draw]{\overline 1};\\
		\node[draw,fill=gray!30]{2}; & 
		\node[draw,fill=gray!30]{2}; & 
		\node[draw,fill=gray!30]{2}; & 
		\node[draw,fill=gray!30]{2}; & 
		\node[draw]{3}; & 
		\node[draw]{0}; &
		\node[draw]{\overline 2}; &
		\node[draw]{\overline 2}; \\
		\node[draw,fill=gray!30]{3}; & 
		\node[draw]{\overline 3}; &  
		\node[draw]{\overline 3}; \\
	};
	\end{tikzpicture}\ .
	\]
By Definition \ref{def:Tisignature}, we have 
	\[
	\arraycolsep=2pt
	\begin{array}{rcccccccccccccccccccccccccccl}
	\rw{ME}(T)=& \overline1 & \overline1 & \overline2 & \overline3 & 0 & 2 & 1 & 1& 1& 1& 1& 1& 1& 1& 1&
	\overline2 &\overline2 & 0 & 3 & 2 & 2 & 2 & 2 &
	\overline3 &\overline3 & 3 & \\[2pt]
	\BR_3(T) = &  &  &  & )) & )\color{red}{(} \tikzmark{left1}{} & &  & & & & & & & & &
	& &\,^{\tikzmark{right1}{}}\color{red}{)} \color{red}{(} \tikzmark{left2}{} & \color{red}{((} &  &  &  &  &
	\color{red}{))} & \,^{\tikzmark{right2}{}}\color{red}{)}\color{blue}{)} & {\color{blue} (}( &  \\
	\BR_3^c(T) = & & & &  )) & ) & & & & & &  & & & & & & & & & & & & & & \color{blue}{)} & {\color{blue}(} ( & & ,
	\end{array}\\
	\]
	so by Definition \ref{def:T-ops}, we obtain 
\[
e_3T =
\begin{tikzpicture}[baseline]
		\matrix [tab] 
		{
			\node[draw,fill=gray!30]{1}; & 
			\node[draw,fill=gray!30]{1}; & 
			\node[draw,fill=gray!30]{1}; & 
			\node[draw,fill=gray!30]{1}; & 
			\node[draw,fill=gray!30]{1}; & 
			\node[draw,fill=gray!30]{1}; & 
			\node[draw,fill=gray!30]{1}; & 
			\node[draw,fill=gray!30]{1}; & 
			\node[draw,fill=gray!30]{1}; & 
			\node[draw]{2}; & 
			\node[draw]{0}; &
			\node[draw]{\overline 3}; &
			\node[draw]{\overline 2}; & 
			\node[draw]{\overline 1}; & 
			\node[draw]{\overline 1};\\
			\node[draw,fill=gray!30]{2}; & 
			\node[draw,fill=gray!30]{2}; & 
			\node[draw,fill=gray!30]{2}; & 
			\node[draw,fill=gray!30]{2}; & 
			\node[draw]{3}; & 
			\node[draw]{0}; &
			\node[draw]{\overline 2}; &
			\node[draw]{\overline 2}; \\
			\node[draw,fill=gray!30]{3}; & 
			\node[draw]{0}; &  
			\node[draw]{\overline 3}; \\
		};
		\end{tikzpicture}\ 
\]
and
\[
	f_3T =
	\begin{tikzpicture}[baseline]
	\matrix [tab] 
	{
		\node[draw,fill=gray!30]{1}; & 
		\node[draw,fill=gray!30]{1}; & 
		\node[draw,fill=gray!30]{1}; & 
		\node[draw,fill=gray!30]{1}; & 
		\node[draw,fill=gray!30]{1}; & 
		\node[draw,fill=gray!30]{1}; & 
		\node[draw,fill=gray!30]{1}; & 
		\node[draw,fill=gray!30]{1}; & 
		\node[draw,fill=gray!30]{1}; & 
		\node[draw,fill=gray!30]{1}; &
		\node[draw]{2}; & 
		\node[draw]{0}; &
		\node[draw]{\overline 3}; &
		\node[draw]{\overline 2}; & 
		\node[draw]{\overline 1}; & 
		\node[draw]{\overline 1};\\
		\node[draw,fill=gray!30]{2}; &
		\node[draw,fill=gray!30]{2}; & 
		\node[draw,fill=gray!30]{2}; & 
		\node[draw,fill=gray!30]{2}; & 
		\node[draw,fill=gray!30]{2}; & 
		\node[draw]{3}; & 
		\node[draw]{0}; &
		\node[draw]{\overline 2}; &
		\node[draw]{\overline 2}; \\
		\node[draw,fill=gray!30]{3}; & 
		\node[draw]{0}; &  
		\node[draw]{\overline 3};&
		\node[draw]{\overline 3}; \\
	};
\end{tikzpicture}\ .
\]
\end{Example}

\begin{Example} \label{ex:CefAction}
	Let $T\in \TT(\infty)$ for $\g$ of type $C_3$ where
	\[
	T = 	\begin{tikzpicture}[baseline]
	\matrix [tab] 
	{
		\node[draw,fill=gray!30]{1}; & 
		\node[draw,fill=gray!30]{1}; & 
		\node[draw,fill=gray!30]{1}; & 
		\node[draw,fill=gray!30]{1}; & 
		\node[draw,fill=gray!30]{1}; & 
		\node[draw,fill=gray!30]{1}; & 
		\node[draw,fill=gray!30]{1}; & 
		\node[draw,fill=gray!30]{1}; & 
		\node[draw,fill=gray!30]{1}; & 
		\node[draw]{2}; & 
		\node[draw]{3}; &
		\node[draw]{3}; &
		\node[draw]{\overline 3}; &
		\node[draw]{\overline 2}; &
		\node[draw]{\overline 1}; \\
		\node[draw,fill=gray!30]{2}; &
		\node[draw,fill=gray!30]{2}; &
		\node[draw,fill=gray!30]{2}; &
		\node[draw,fill=gray!30]{2}; &
		\node[draw]{3};&
		\node[draw]{\overline 3}; &
		\node[draw]{\overline 3}; &
		\node[draw]{\overline 1}; \\
		\node[draw,fill=gray!30]{3};&
		\node[draw]{\overline 3}; &
		\node[draw]{\overline 3};\\
	};
	\end{tikzpicture}\ .
	\]
By Definition \ref{def:Tisignature}, we have 
	\[
		\arraycolsep=2pt
	\begin{array}{rcccccccccccccccccccccccccccl}
	\rw{ME}(T) =& \overline{1} & \overline2 & \overline3 & 3 & 3 & 1 & 1 & 1 & 1 & 1 & 1 & 1 & 1 & 1 &
	\overline1 & \overline3 & \overline3 & 3 & 2 & 2 & 2 & 2 &
	\overline3 &\overline3 & 3 &  \\
	\BR_3(T) =& & & ) &\color{red}{(}& \color{red}{(} & & & & & & & & & & & \color{red}{)}& \color{red}{)}& \color{red}{(} & & & & & \color{red}{)} & \color{blue}{)} & \color{blue}{(} &  \\
	\BR_3^c(T) =& & & ) & & & & & & & & & & & & & & & & & & & & & \color{blue}{)} & \color{blue}{(} & ,
	\end{array}\\
	\]
so by Definition \ref{def:T-ops}, we obtain
		\[
		e_3T =	\begin{tikzpicture}[baseline]
		\matrix [tab] 
		{
			\node[draw,fill=gray!30]{1}; & 
			\node[draw,fill=gray!30]{1}; & 
			\node[draw,fill=gray!30]{1}; & 
			\node[draw,fill=gray!30]{1}; & 
			\node[draw,fill=gray!30]{1}; & 
			\node[draw,fill=gray!30]{1}; & 
			\node[draw,fill=gray!30]{1}; & 
			\node[draw,fill=gray!30]{1}; & 
			\node[draw]{2}; & 
			\node[draw]{3}; &
			\node[draw]{3}; &
			\node[draw]{\overline 3}; &
			\node[draw]{\overline 2}; &
			\node[draw]{\overline 1}; \\
			\node[draw,fill=gray!30]{2}; &
			\node[draw,fill=gray!30]{2}; &
			\node[draw,fill=gray!30]{2}; &
			\node[draw]{3};&
			\node[draw]{\overline 3}; &
			\node[draw]{\overline 3}; &
			\node[draw]{\overline 1}; \\
			\node[draw,fill=gray!30]{3};&
			\node[draw]{\overline 3};\\
		};
		\end{tikzpicture}\ 	
		\]
and
		\[
		f_3T =\begin{tikzpicture}[baseline]
		\matrix [tab] 
		{
			\node[draw,fill=gray!30]{1}; & 
			\node[draw,fill=gray!30]{1}; & 
			\node[draw,fill=gray!30]{1}; & 
			\node[draw,fill=gray!30]{1}; & 
			\node[draw,fill=gray!30]{1}; & 
			\node[draw,fill=gray!30]{1}; & 
			\node[draw,fill=gray!30]{1}; & 
			\node[draw,fill=gray!30]{1}; & 
			\node[draw,fill=gray!30]{1}; & 
			\node[draw,fill=gray!30]{1}; &
			\node[draw]{2}; & 
			\node[draw]{3}; &
			\node[draw]{3}; &
			\node[draw]{\overline 3}; &
			\node[draw]{\overline 2}; &
			\node[draw]{\overline 1}; \\
			\node[draw,fill=gray!30]{2}; &
			\node[draw,fill=gray!30]{2}; &
			\node[draw,fill=gray!30]{2}; &
			\node[draw,fill=gray!30]{2}; &
			\node[draw,fill=gray!30]{2}; &
			\node[draw]{3};&
			\node[draw]{\overline 3}; &
			\node[draw]{\overline 3}; &
			\node[draw]{\overline 1}; \\
			\node[draw,fill=gray!30]{3};&
			\node[draw]{\overline 3};&
			\node[draw]{\overline 3}; &
			\node[draw]{\overline 3};\\
		};
		\end{tikzpicture}\ .
		\]
\end{Example}

\begin{Theorem}[\cite{HL08}]\label{prop:T-is-crystalFE}
	Using $\rw{FE}(T)$ and the operations defined in Definition \ref{def:T-ops}, $\TT(\infty)$ is a crystal isomorphic to $B(\infty)$.
	\qed
\end{Theorem}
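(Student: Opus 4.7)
The plan is to realize $\TT(\infty)$ as a subcrystal inside a tensor product of copies of the fundamental crystal $B(\omega_1)$ pictured in \eqref{fundBC}, and then to invoke a standard characterization of $B(\infty)$. The first step is the embedding. Given $T \in \TT(\infty)$, read off $\rw{FE}(T) = r_1 r_2 \cdots r_k$ and send $T$ to the pure tensor $b_{r_1} \otimes b_{r_2} \otimes \cdots \otimes b_{r_k}$ of fundamental crystal elements. Kashiwara's tensor product rule for $e_i$ and $f_i$ on such a word is computed by precisely the bracketing procedure of Definition \ref{def:Tisignature}: form $\BR_i$, cancel matched pairs, and then modify the tensor factor indicated by the rightmost surviving `$)$' (for $e_i$) or the leftmost surviving `$($' (for $f_i$). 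So at the level of tensor words, the signature-based operators of Definition \ref{def:T-ops} are automatically crystal operators inherited from the fundamental tensor product.

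The second step is to check that these operators preserve $\TT(\infty)$ (with the convention that $e_i T = \boldsymbol{0}$ is allowed). This requires a case analysis showing that the box selected by the Far-Eastern signature (i) lies in a position where replacing its entry by the predecessor or successor in $J(B_n)$ or $J(C_n)$ yields a semistandard tableau, and (ii) remains marginally large after the relabeling. The latter explains the column-deletion rule in $e_i$ and the column-insertion rule in $f_i$: whenever $f_i$ promotes an $\mybox{i}$ in row $i$ to an $\mybox{i{+}1}$, the length of row $i+1$ grows by one, and inserting a fresh column $1,2,\ldots,i$ is exactly what restores the defining marginal-largeness condition. The Far-Eastern ordering is engineered so that the affected box is always at the top of its column in its row, which is why the semistandardness is preserved.

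Finally, to identify $\TT(\infty)$ with $B(\infty)$ I would invoke the uniqueness characterization of $B(\infty)$ (for example, via Kashiwara--Saito, or equivalently as the projective limit of the crystals $B(\lambda)$). The tableau $T_\infty$, in which every box is shaded, is the unique element of weight $0$ and satisfies $e_i T_\infty = \boldsymbol{0}$ for all $i$, so it plays the role of the highest weight element. Using the embedding of the first step, $\TT(\infty)$ lies inside an inverse limit of tensor powers of $B(\omega_1)$; the connected component of $T_\infty$ there is $B(\infty)$, and one verifies that this connected component is exactly $\TT(\infty)$ by comparing characters (the weight contributions in Definition \ref{def:Tweight} are arranged so that the generating function matches the Kostant partition count predicted by the PBW theorem).

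The main obstacle I expect is the second step. The Far-Eastern reading interleaves entries from different rows, so in principle the signature cancellations could pair boxes across rows, and one must argue that the surviving unbracketed box is always the \emph{right} one for semistandardness and marginal-largeness to persist after modification. The presence of the doubled arrow at node $n$, and (in type $B_n$) the extra middle letter $\mybox{0}$, generates cases that do not occur in type $A_n$; I would handle these by adapting the case-by-case analysis of \cite{HL08} and the related arguments in \cite{CT15}, splitting on whether the modified box lies in row $i$ versus a deeper row, whether it is a barred or unbarred letter, and, in type $B_n$, whether it is a $\mybox{0}$.
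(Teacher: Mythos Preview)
The paper does not prove this theorem at all: it is stated with the attribution \cite{HL08} and closed immediately with a \qed, signalling that the result is imported wholesale from Hong--Lee. There is therefore no ``paper's own proof'' to compare your proposal against; the authors treat it as background.

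That said, your sketch is broadly in the spirit of how such results are established in the literature (embed via a reading word into a tensor power of the vector representation crystal, check closure under the signature operators, then identify the component of the highest-weight element with $B(\infty)$). Two small points: first, your explanation of the column insertion is slightly off---when $f_i$ turns a shaded $\mybox{i}$ in row $i$ into $\mybox{i{+}1}$, the length of row $i{+}1$ does \emph{not} change; rather, the count of $\mybox{i}$ in row $i$ drops by one, and the inserted column $1,2,\dots,i$ restores that count. Second, the final identification with $B(\infty)$ is not done by a character comparison; Hong--Lee instead use Kashiwara's characterization of $B(\infty)$ via the maps $\Psi_\lambda\colon B(\infty)\to B(\lambda)$ (or, equivalently, the $\varepsilon_i^*$ data), and one checks the relevant axioms directly on tableaux. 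Your reference to Kashiwara--Saito is a valid alternative route, but the character argument alone would not suffice.
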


It turns out that using $\rw{ME}$ in place of  $\rw{FE}$ is more convenient for us, and we can do this because of the following: 
\begin{Proposition} \label{prop:row-reading}
Let $\TT(\infty)$ be the set of marginally large tableaux of type $B_n$ or $C_n$.  Then the crystal structures on $\TT(\infty)$ using either $\rw{FE}$ or $\rw{ME}$ are identical.
\end{Proposition}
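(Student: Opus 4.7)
The goal is to show that for each $i \in I$ and each $T \in \TT(\infty)$, the cancelled $i$-signatures obtained from $\rw{ME}(T)$ and from $\rw{FE}(T)$ coincide. Since $e_i T$ and $f_i T$ depend only on the positions of the rightmost $)$ and leftmost $($ in $\BR_i^c(T)$, an identification of signatures immediately implies that the two crystal structures are literally identical.

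The key observation is that both readings agree on the relative order of any two boxes that share a row (both read right-to-left within a row) and of any two boxes that share a column (both read top-to-bottom within a column). They differ only on pairs of boxes lying in strictly distinct rows and strictly distinct columns: for such a pair, the northwest box precedes the southeast box in $\rw{ME}$ but follows it in $\rw{FE}$. Therefore, $\rw{FE}(T)$ can be obtained from $\rw{ME}(T)$ by a sequence of adjacent transpositions, each of which exchanges two letters whose underlying boxes form a strictly NW/SE pair. Semistandardness (weakly increasing along rows, strictly along columns) then forces the NW entry to be strictly smaller than the SE entry in the alphabet $J$.

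I would then verify that each such transposition preserves $\BR_i^c$. Transpositions between two letters that contribute the same bracket symbol (or no bracket at all) at index $i$ are manifestly invariant. In the remaining cases the 2-letter bracket subsequence does change locally (e.g.\ $()$ becomes $)($), but the global cancellation absorbs the difference: since marginally large tableaux have partition shape, the boxes at the NE and SW corners of the swap pair also lie in $T$, and their entries---which lie strictly between the swapped entries by semistandardness---contribute compensating brackets in the surrounding reading word. A finite case analysis, indexed by which of the bracket-contributing letters $\mybox{i}, \mybox{i+1}, \mybox{\overline{i+1}}, \mybox{\overline{i}}$ (plus $\mybox{0}$ in type $B_n$ when $i=n$) occupy the swap pair, then closes the argument.

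The main obstacle is the case $i=n$ in type $B_n$, where $\mybox{0}$ contributes the non-singleton bracket $)($ rather than a single symbol, so both components of its bracket can interact with the surroundings. Here the compensating-bracket argument requires more bookkeeping, and one invokes Definition \ref{defMLT}(4) (at most one $\mybox{0}$ per row) together with the strict column order to ensure that any local swap involving $\mybox{0}$ is still compensated globally. The computations in Examples \ref{ex:BefAction} and \ref{ex:CefAction}, where $\rw{ME}$ yields the same signatures as would be obtained via $\rw{FE}$ and Theorem \ref{prop:T-is-crystalFE}, serve as sanity checks that the case analysis indeed closes in both types.
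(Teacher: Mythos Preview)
Your approach differs from the paper's and has a genuine gap. The compensation-by-corners claim fails in types $B_n$ and $C_n$: for $i\le n-2$, consider a strict NW/SE pair consisting of $\mybox{i}$ (contributing `(') and $\mybox{\overline\imath}$ (contributing `)') whose NE and SW corner boxes both carry the entry $i+2$. Those corners contribute nothing to $\BR_i$, so the swap turns a local `()' into `)(' with no compensation, and the cancelled sequence genuinely changes. To rescue the argument you would need to show that such pairs never occur, and that is exactly where marginal largeness---not merely partition shape---enters: an unshaded $\mybox{i}$ in row $j<i$ lies in a column strictly to the right of every box of row $j{+}1$, hence of every lower row, so it is never NW of anything below it; and a shaded $\mybox{i}$ in row $i$ can only sit NW of boxes in row $i{+}1$, where $\overline\imath$ is forbidden. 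You never invoke this structure.

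The paper uses that fact directly and bypasses the transposition bookkeeping. Since no two unshaded boxes relevant to $\BR_i$ form a NW/SE pair, both readings list them in the same order; this common ``prefix'' of $\BR_i(T)$ is literally identical under $\rw{ME}$ and $\rw{FE}$. What remains---the shaded $\mybox{i}$ in row $i$ together with all of row $i{+}1$---is handled by writing out the two suffix bracket strings explicitly and checking, via $\ell_{i,i}>\ell_{i+1,i+1}$, that each reduces to the same run of `(' with its leftmost symbol at the rightmost shaded $\mybox{i}$. The $i=n$ case is absorbed into this computation (there is no row $n{+}1$), so no separate analysis of $\mybox{0}$ is needed.
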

\begin{proof} Fix $T \in \TT(\infty)$ and $i\in I$. 
By the definition of $e_i$ and $f_i$, we must show that the leftmost `(' and the rightmost `)' in $\BR_i^c(T)$  correspond to the same box for the two different readings.
We need only consider the positions of the $\mybox{i}$, $\mylongbox{i+1}$, $\mybox{\overline{i}}$, $\mylongbox{\overline{i+1}}$, and $\mybox{0}$ (if $i=n$ and $T$ is of type $B_n$). By Definition \ref{defMLT}, these all occur in the first $i+1$ rows. 

The unshaded boxes are read in the same order under the two readings (since there cannot be two in the same column, and if one box is to the left of another it is also weakly below it). Thus the two bracketing sequences are identical until the first shaded $\mybox{i}$ is read. We will call that part of the sequences the prefix. After that, the sequences are as follows, where we use $\ell_{i,j}$ to denote the number of $\mybox{j}$ in row $i$ (and if $i=n$, $\ell_{i+1, ?}$ is taken to be 0):
\begin{align*}
& \text{Middle-Eastern: }
\cdots
(^{\ell_{i,i}}\,
(^{\ell_{i+1,\overline{i+1}}}\,
)^{\ell_{i+1,i+1}},\\
& \text{Far-Eastern: } \cdots
(^{\ell_{i,i}+\ell_{i+1,\overline{i+1}}-\ell_{i+1,i+1}}\,
\underbrace{()\cdots()}_{\ell_{i+1,i+1}}. 	
\end{align*}
Since $\ell_{i,i}>\ell_{i+1,i+1}$, after cancellation these parts of the sequences contain only `$($' and the leftmost `$($' corresponds to the rightmost $\mybox{i}$ in row $i$.  

Thus if the prefix has an uncanceled `(', then this remains uncanceled in both complete bracketing sequences, and corresponds to the same box for both. If the prefix does not have an uncanceled `(', then in both readings the leftmost uncanceled `(' comes from the rightmost $\mybox{i}$ in row $i$. Furthermore, the sequences only have an uncanceled `)' if this comes from the prefix, in which case it corresponds to the same box in both. 
\end{proof}

\subsection{Crystal of Kostant partitions}

Here we review the crystal structure on Kostant partitions from \cite{SST1}. As explained there, this is naturally identified with the crystal of PBW monomials as given in \cite{BZ01,L10} (see also \cite{Tin}) for the reduced expression
\[
w_0 = 
(s_1 s_2 \cdots s_{n-2} s_{n-1} s_n s_{n-2} \cdots s_1)\cdots 
(s_{n-2} s_{n-1} s_n s_{n-2}) s_{n-1} s_n.
\]

Let $\R$ be the set of symbols $\{ (\beta) : \beta\in\Phi^+\}$.  Let $\Kp(\infty)$ be the free $\bz_{\ge0}$-span of $\R$.  This is the set of {\it Kostant partitions}. Elements of $\Kp(\infty)$ are written in the form $\bm\alpha = \sum_{(\beta)\in \R} c_\beta(\beta)$.

\begin{Definition}\label{def:KPisignature}
Consider the following sequences of positive roots depending on $i\in I$ for type $B_n$ or $C_n$.  For $1 \le i \le n-1$, define
\begin{align*}
\Phi_i^B=\Phi_i^C  &= ( \beta_{1,i} ,\beta_{1,i-1}  , \gamma_{1,i}, \gamma_{1,i+1} , \dots ,  \beta_{i-1,i} , \beta_{i-1,i-1} , \gamma_{i-1,i} ,\gamma_{i-1,i+1} , \beta_{i,i} ), \\
\Phi_{n}^B &= (\beta_{1,n},\beta_{1,n-1},\gamma_{1,n} , \beta_{1,n} , \dots ,\beta_{n-1,n} , \beta_{n-1,n-1} ,  \gamma_{n-1,n} , \beta_{n-1,n} ,\beta_{n,n} ), \\
\Phi_{n}^C &= (\gamma_{1,1}, \beta_{1,n-1},\gamma_{1,n} ,\gamma_{1,1} , \dots ,  \gamma_{n-1,n-1} ,\beta_{n-1,n-1} , \gamma_{n-1,n} , \gamma_{n-1,n-1} ,\gamma_{n,n} ). 
\end{align*}
Let $\bm\alpha \in \Kp(\infty)$.  Define the bracketing sequence $S_i(\bm{\alpha})$ by replacing the roots in $\Phi_i^B$ or $\Phi_i^C$ with left and right brackets 
as follows:

In type $B_n$ and $C_n$ with $1\leq i < n$, set
\[ 
\begin{array}{rcccccccccc}
S_i(\bm\alpha) = &
\underbrace{)\cdots)}_{c_{\beta_{1,i}}}\!\! & \!\! 
\underbrace{(\cdots(}_{c_{\beta_{1,i-1}}} \!\! & \!\!   
\underbrace{)\cdots)}_{c_{\gamma_{1,i}}} \!\! & \!\!  
\underbrace{(\cdots(}_{c_{\gamma_{1,i+1}}} &  
\cdots &
\underbrace{)\cdots)}_{c_{\beta_{i-1,i}}} \!\! & \!\!  
\underbrace{(\cdots(}_{c_{\beta_{i-1,i-1}}} \!\! & \!\!  
\underbrace{)\cdots)}_{c_{\gamma_{i-1,i}}} \!\! & \!\!  
\underbrace{(\cdots(}_{c_{\gamma_{i-1,i+1}}} \!\! & \!\!  
\underbrace{)\cdots)}_{c_{\beta_{i,i}}}
\end{array}.
\]
In type $B_n$ with $i=n$, set
\[
\begin{array}{rcccccccccc}
\arraycolsep=0pt
S_n(\bm\alpha) = &
\underbrace{)\cdots)}_{c_{\beta_{1,n}}} \!\! & \!\!  
\underbrace{(\cdots(}_{2c_{\beta_{1,n-1}}} \!\! & \!\!   
\underbrace{)\cdots)}_{2c_{\gamma_{1,n}}} \!\! & \!\!
\underbrace{(\cdots(}_{c_{\beta_{1,n}}} &  
\cdots &
\underbrace{)\cdots)}_{c_{\beta_{n-1,n}}} \!\! & \!\!
\underbrace{(\cdots(}_{2c_{\beta_{n-1,n-1}}} \!\! & \!\!
\underbrace{)\cdots)}_{2c_{\gamma_{n-1,n}}} \!\! & \!\!
\underbrace{(\cdots(}_{c_{\beta_{n-1,n}}} \!\! & \!\!
\underbrace{)\cdots)}_{c_{\beta_{n,n}}} 
\end{array}.
\]
In type $C_n$ with $i=n$, set 
\[
\begin{array}{rcccccccccc}
\arraycolsep=0pt
S_n(\bm\alpha) = &
\underbrace{)\cdots)}_{c_{\gamma_{1,1}}} \!\! & \!\!
\underbrace{(\cdots(}_{c_{\beta_{1,n-1}}} \!\! & \!\! 
\underbrace{)\cdots)}_{c_{\gamma_{1,n}}} \!\! & \!\!
\underbrace{(\cdots(}_{c_{\gamma_{1,1}}}  &
\cdots &
\underbrace{)\cdots)}_{c_{\gamma_{n-1,n-1}}} \!\! & \!\!
\underbrace{(\cdots(}_{c_{\beta_{n-1,n-1}}} \!\! & \!\!
\underbrace{)\cdots)}_{c_{\gamma_{n-1,n}}} \!\! & \!\!
\underbrace{(\cdots(}_{c_{\gamma_{n-1,n-1}}} \!\! & \!\!
\underbrace{)\cdots)}_{c_{\gamma_{n,n}}} \ .
\end{array}
\]
In each case successively cancel all $()$-pairs in $S_i(\bm\alpha)$ to obtain a sequence of the form $)\cdots)(\cdots($ which we call the $i$-signature of $\bm{\alpha}$ denoted by $S_i^c(\bm\alpha)$.
\end{Definition}

\begin{Remark}
Roughly, left brackets correspond to roots $\beta \in \Phi_i$ such that $\beta+\alpha_i$ is a root and right brackets correspond to roots $\beta\in\Phi_i$ such that $\beta-\alpha_i$ is a root (or $\beta = \alpha_i$) except when $i=n$, where some subtleties arise. 
\end{Remark}

\begin{Definition} \label{def:KPops}
Let $i\in I$ and $\bm\alpha\in\Kp(\infty)$ with $\bm\alpha = \sum_{(\beta)\in \R} c_\beta(\beta)\in\Kp(\infty)$. 
\begin{itemize}
\item Define $\wt(\bm\alpha) = -\sum_{\beta\in\Phi^+} c_\beta\beta$.
\item Define $\varepsilon_i(\bm\alpha) = $ number of uncanceled `)' in $S_i(\bm\alpha)$.
\item Define $\varphi_i(\bm\alpha) = \varepsilon_i(\bm\alpha) + \langle \alpha_i^\vee , \wt(\bm\alpha) \rangle$.
\end{itemize}
The following two rules hold except in  the case where $\g$ is of type $C_n$ and $i= n$.
\begin{itemize}
\item Let $\beta$ be the root corresponding to the rightmost `$)$' in $S_i^c(\bm\alpha)$.  Define
\[
e_i\bm\alpha = \bm\alpha - (\beta) + (\beta-\alpha_i).
\]
Note that if $\beta=\alpha_i$, we interpret $(0)$ as the additive identity in $\Kp(\infty)$.  Furthermore, if no such `$)$' exists, then $e_i\bm\alpha = \boldsymbol{0}$, where $\boldsymbol{0}$ is a formal object not contained in $\Kp(\infty)$. 
\item Let $\gamma$ denote the root corresponding to the leftmost `$($' in $S_i^c(\bm\alpha)$.  Define,
\[
f_i\bm\alpha = \bm\alpha - (\gamma) + (\gamma+\alpha_i).
\]
If no such `$($' exists, set $f_i\bm\alpha = \bm\alpha + (\alpha_i)$.
\end{itemize}
If $\g$ is of type $C_n$, then $e_n$ and $f_n$ are defined as follows.
\begin{itemize}
\item Let $\beta$ be the root corresponding to the rightmost `$)$' in $S_n^c(\bm\alpha)$.  Define $e_n\bm\alpha$ as follows, for $k \in \{ 1, \dots, n-1 \}$.  If no such $\beta$ exists, then $e_n\bm\alpha = \boldsymbol{0}$.   
\begin{enumerate}
\item If $\beta = \gamma_{k,n}$ and $c_{\gamma_{k,n}}=c_{\beta_{k,n-1}}+1$, then
$
e_n\bm\alpha = \bm\alpha - (\beta) + (\beta_{k,n-1}).
$
\item If $\beta = \gamma_{k,n}$ and $c_{\gamma_{k,n}}>c_{\beta_{k,n-1}}+1$, then
$
e_n\bm\alpha = \bm\alpha - 2(\beta) +  (\gamma_{k,k}).
$
\item If $\beta = \gamma_{k,k}$, then
$
e_n\bm\alpha = \bm\alpha -(\beta) + 2(\beta_{k,n-1}).
$
\item If $\beta = \gamma_{n,n}$, then 
$
e_n \bm\alpha = \bm\alpha - (\beta).
$
\end{enumerate}
\item Let $\gamma$ denote the root corresponding to the leftmost `$($' in $S_n^c(\bm\alpha)$.  Define $f_n\bm\alpha$ as follows, for $k \in \{ 1, \dots, n \}$.  If no such $\gamma$ exists, then $f_n\bm\alpha = \bm\alpha + (\gamma_{n,n})$.
\begin{enumerate}
\item If $\gamma=\beta_{k,n-1}$ and $c_{\gamma_{k,n}}=c_{\beta_{k,n-1}}-1$, then 
$
f_n\bm\alpha = \bm\alpha - (\gamma) + (\gamma_{k,n}).
$
\item If  $\gamma=\beta_{k,n-1}$ and $c_{\gamma_{k,n}}<c_{\beta_{k,n-1}}-1$, then 
$
f_n\bm\alpha = \bm\alpha - 2(\gamma) + (\gamma_{k,k}).
$
\item If $\gamma=\gamma_{k,k}$, then
$
f_n\bm\alpha = \bm\alpha - (\gamma) + 2(\gamma_{k,n}).
$
\end{enumerate}
\end{itemize}
\end{Definition}

\begin{Example}\label{ex:KPopsC_2f3e}
Let $\Kp(\infty)$ be of type $C_3$ and let $\bm\alpha\in \Kp(\infty)$, where
\[
\bm\alpha = 4(\beta_{1,2})+2(\gamma_{1,3})+2(\gamma_{1,1})+(\gamma_{2,2})+(\gamma_{2,3})+(\gamma_{3,3}).
\]
We consider the action of $f_3$, so we must first compute the bracketing sequence: 
\[
\begin{array}{ccccccccccl}
& c_{\gamma_{1,1}} & c_{\beta_{1,2}} & c_{\gamma_{1,3}} & c_{\gamma_{1,1}} & c_{\gamma_{2,2}} & c_{\beta_{2,2}} & c_{\gamma_{2,3}} & c_{\gamma_{2,2}} & c_{\gamma_{3,3}}\\
S_3(\bm\alpha)= & )) & (( \color{red}{((} & \color{red}{))} & \color{red}{((} & \color{red}{)} & & \color{red}{)} & \color{red}{(} & \color{red}{)} \\ 
S_3^c(\bm\alpha)=& ))& {\color{blue}(} ( &&&&&&&&.
\end{array}
\] 
Hence $f_3\bm\alpha = 2(\beta_{1,2})+2(\gamma_{1,3})+3(\gamma_{1,1})+(\gamma_{2,2})+(\gamma_{2,3})+(\gamma_{3,3})$.
\end{Example}

\begin{Example}\label{ex:KPopsC_3f2e}
	Let $\Kp(\infty)$ be of type $C_3$ and let $\bm\alpha\in \Kp(\infty)$, where
	\[
	\bm\alpha = 2(\beta_{1,2})+2(\gamma_{1,3})+3(\gamma_{1,1})+(\gamma_{2,2})+(\gamma_{2,3})+(\gamma_{3,3}).
	\]
	To compute $f_3\bm\alpha$ we first need the relevant bracketing sequence, which is  
	\[
	\begin{array}{ccccccccccl}
	& c_{\gamma_{1,1}} & c_{\beta_{1,2}} & c_{\gamma_{1,3}} & c_{\gamma_{1,1}} & c_{\gamma_{2,2}} & c_{\beta_{2,2}} & c_{\gamma_{2,3}} & c_{\gamma_{2,2}} & c_{\gamma_{3,3}}\\
	S_3(\bm\alpha)= & ))) &  \color{red}{((} & \color{red}{))} & ({\color{red}((} & \color{red}{)} & & \color{red}{)} & \color{red}{(} & \color{red}{)} \\ 
	S_3^c(\bm\alpha)=& )))& &  & { \color{blue}(} &&&&&& .
	\end{array}
	\] 
Hence $f_3\bm\alpha = 2(\beta_{1,2})+4(\gamma_{1,3})+2(\gamma_{1,1})+(\gamma_{2,2})+(\gamma_{2,3})+(\gamma_{3,3})$.
\end{Example}

\begin{Proposition}[\cite{SST1}]\label{prop:kp-is-crystal}
Using the operators defined in Definition \ref{def:KPops}, the set $\Kp(\infty)$ is a crystal isomorphic to $B(\infty)$. \qed
\end{Proposition}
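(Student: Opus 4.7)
The plan is to identify $\Kp(\infty)$ with the crystal of PBW monomials arising from the reduced expression
\[
w_0 = (s_1 s_2 \cdots s_{n-2} s_{n-1} s_n s_{n-2} \cdots s_1)\cdots(s_{n-2} s_{n-1} s_n s_{n-2}) s_{n-1} s_n,
\]
and then to verify, by braid-move computations, that the rules of Definition \ref{def:KPops} implement the Kashiwara operators in these PBW coordinates. The inversion sequence of this reduced word enumerates $\Phi^+$ in the order $\beta_{1,1},\beta_{1,2},\gamma_{1,2},\gamma_{1,3},\ldots$ appearing in Tables \ref{posrootsB} and \ref{posrootsC}, so a Kostant partition $\bm\alpha=\sum c_\beta(\beta)$ corresponds to the PBW monomial with multi-exponent $(c_\beta)$. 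By Lusztig's PBW-crystal theorem (cf.\ \cite{BZ01,L10,Tin}), that set, equipped with Kashiwara operators transported through the bijection, is a crystal isomorphic to $B(\infty)$; the task therefore reduces to showing that Definition \ref{def:KPops} reproduces the transported operators.

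For a simple root $\alpha_i$ with $1\le i\le n-1$, I would bring $s_i$ to the start of the reduced expression by a standard sequence of braid moves, track the resulting piecewise-linear change of PBW exponents, and then tropicalize the elementary Kashiwara rule for the first letter of a reduced expression. The output is a bracketing sequence in which each $\beta\in\Phi^+$ contributes a $)$ when $\beta-\alpha_i\in\Phi^+\cup\{0\}$ and a $($ when $\beta+\alpha_i\in\Phi^+$, with $\beta$ read in the inversion order; after discarding positions where both rules are vacuous the remaining subsequence is exactly $\Phi_i^B=\Phi_i^C$, and the induced edits $\beta\mapsto\beta\pm\alpha_i$ match Definition \ref{def:KPops}. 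This step parallels the analogous verifications in type $A$ from \cite{CT15} and type $D$ from \cite{SST2}.

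The main obstacle is $i=n$, where the simple root differs in length from its neighbor $\alpha_{n-1}$ and the tropicalized Lusztig braid formulas become piecewise linear in a nontrivial way. In type $B_n$ this produces the doubled bracket counts $2c_{\beta_{k,n-1}}$ and $2c_{\gamma_{k,n}}$ appearing in $S_n$; in type $C_n$ the transitions among $(\gamma_{k,n})$, $(\beta_{k,n-1})$ and $(\gamma_{k,k})$ split into exactly the four cases listed in Definition \ref{def:KPops}, with the coefficient shifts of $\pm 2$ reflecting the exchange $(\gamma_{k,k})\leftrightarrow 2(\gamma_{k,n})$ mediated by divided powers of a long root. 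I would handle both by a direct rank-$2$ PBW computation (in $B_2$ and $C_2$ respectively) using the Lusztig braid formulas, and then propagate the answer to all $n$ by observing that roots supported in disjoint blocks of the reduced expression $q$-commute in the PBW algebra so that their contributions to the bracketing rule concatenate cleanly. Once this rank-$2$ verification is in hand, the proposition follows.
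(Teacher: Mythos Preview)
The paper does not prove this proposition at all: the \qed immediately following the statement signals that it is quoted from \cite{SST1} without argument. So there is no ``paper's own proof'' to compare against; the relevant benchmark is the argument in \cite{SST1}.

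Your outline is in fact the strategy used in \cite{SST1}: identify a Kostant partition with the tuple of Lusztig exponents for the chosen reduced expression of $w_0$, invoke the general fact that PBW monomials carry the $B(\infty)$ crystal, and then compute the crystal operators in these coordinates by moving $s_i$ to the front via braid moves and tropicalizing. The paper itself says as much in the paragraph introducing $\Kp(\infty)$. So as a reconstruction you are on the right track, and your diagnosis that the only genuinely delicate step is $i=n$ is correct (the paper's Remark after Definition~\ref{def:KPisignature} flags exactly this).

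That said, what you have written is a plan, not a proof. The phrases ``I would bring $s_i$ to the start by a standard sequence of braid moves'' and ``I would handle both by a direct rank-$2$ PBW computation'' are precisely where all the content lies, and you have not carried them out. In particular, for type $C_n$ the four-case description of $e_n,f_n$ in Definition~\ref{def:KPops} does \emph{not} fall out of a single rank-$2$ calculation in $C_2$ in an obvious way: the splitting depends on the relation between $c_{\gamma_{k,n}}$ and $c_{\beta_{k,n-1}}$, and you need to show that the tropicalized Lusztig map really produces those thresholds. Your claim that ``roots supported in disjoint blocks of the reduced expression $q$-commute'' so that contributions ``concatenate cleanly'' also needs justification, since the blocks of this reduced word are not mutually orthogonal root subsystems. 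If you want this to stand as a proof rather than a pointer to \cite{SST1}, those computations must actually appear.
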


\section{The isomorphism}\label{sec:MLTtoKPtypeBC}

Our isomorphism $\Psi$ is given as a reversible algorithm to construct an element of $\Kp(\infty)$ from an element of $\TT(\infty)$.  We prove that $\Psi$ preserves the crystal structure by using the fact that under the Middle-Eastern reading the bracketing sequence for marginally large tableaux factors by row. Throughout we restrict to types $B_n$ and $C_n$.

\begin{Theorem} \label{isom_BC}
 Define $\Psi\colon \TT(\infty) \longrightarrow \Kp(\infty)$ by the following process. Fix $T\in \TT(\infty)$ and let $R_1,\dots,R_{n}$ denote the rows of $T$ starting at the top.  Set $\Psi(T) = \sum_{j=1}^{n} \Psi(R_j)$, where $\Psi(R_j)$ is defined as follows. 
%First map the $\mybox{\overline{0}}$ and paired $\mybox{\overline{n}}$ boxes.  
If $T$ is of type $B_n$:
\begin{enumerate}
	\item each pair $\Big(\mybox{n},\mybox{\overline{n}}\Big)$ maps to $2(\beta_{j,n})$;
	\item each $\mybox{0}$ maps to $(\beta_{j,n})$;
	\item if $j=n$, then each $\mybox{\overline{n}}$ maps to $2(\beta_{n,n})$.
\end{enumerate}
If $T$ is of type $C_n$:
\begin{enumerate}
	\setcounter{enumi}{3}
	\item each pair $\Big(\mybox{n},\mybox{\overline{n}}\Big)$ maps to $(\gamma_{j,j})$;
	\item if $j=n$, then each $\mybox{\overline{j}}$ maps to $(\gamma_{n,n})$.
\end{enumerate}
For all remaining boxes:
\begin{enumerate}
\setcounter{enumi}{5}
\item $\mybox{\overline{j}}$ maps to $(\beta_{j,j})+(\gamma_{j,j+1})$;
\item each pair $\left(\mybox{k},\mybox{\overline{k}}\right)$, where $j<k<n$, maps to $(\beta_{j,k})+(\gamma_{j,k+1})$;
\item each unpaired $\mybox{k}$ maps to $(\beta_{j,k-1})$, for $k \in \{j+1,\dots,n\}$;
\item each unpaired $\mybox{\overline{k}}$ maps to $(\gamma_{j,k})$, for $\overline{k} \in \{\overline{n},\dots,\overline{j+1}\}$.
\end{enumerate}
Then $\Psi$ is a crystal isomorphism.
\end{Theorem}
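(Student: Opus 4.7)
The plan is to exploit the rigidity of $B(\infty)$: since both $\TT(\infty)$ and $\Kp(\infty)$ are connected crystals realising $B(\infty)$ (Theorem~\ref{prop:T-is-crystalFE}, Proposition~\ref{prop:kp-is-crystal}) and $T_\infty$ and $\mathbf{0}$ are their unique weight-zero elements, it suffices to verify that $\Psi(T_\infty) = \mathbf{0}$ and $\Psi(f_i T) = f_i \Psi(T)$ for every $T$ and $i$. The first is immediate since $T_\infty$ has no unshaded boxes. For organisational clarity I would also record that $\Psi$ preserves weight: this reduces to a term-by-term comparison of Definition~\ref{def:Tweight} against rules (1)--(9) in the theorem, using the expansions in Tables~\ref{posrootsB} and~\ref{posrootsC}; in particular, in rule~(7) the contribution $(\beta_{j,k})+(\gamma_{j,k+1})$ of a paired $(\mybox{k},\mybox{\overline{k}})$ indeed equals $-\wt(\mybox{k}_j)-\wt(\mybox{\overline{k}}_j)$, and analogously for the other paired rules.

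For the compatibility $\Psi(f_i T) = f_i \Psi(T)$ I would compare bracketing sequences. By Proposition~\ref{prop:row-reading} we may use the Middle-Eastern reading, under which $\BR_i(T)$ factors as a concatenation $\BR_i(R_1)\cdots\BR_i(R_{i+1})$; only rows $1,\ldots,i+1$ carry letters relevant to the $i$-arrows of \eqref{fundBC}. On the other side, $S_i(\Psi(T))$ in Definition~\ref{def:KPisignature} is already presented as a concatenation of $i$ blocks indexed by $j=1,\ldots,i$. The key lemma to establish is a row-by-row matching: for each $j$, after partial internal cancellations the $j$-th blocks of $\BR_i(T)$ and $S_i(\Psi(T))$ reduce to the same short string of brackets, with matching correspondence between bracket positions and boxes/roots. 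This is a finite case analysis on the possible shapes of $R_j$ restricted to the letters $i$, $i+1$, $\overline{i+1}$, $\overline{i}$ (and $\mybox{0}$ when $i=n$ in type $B_n$): for each shape I tabulate the brackets contributed via \eqref{fundBC} and compare with the image under $\Psi$ followed by Definition~\ref{def:KPisignature}. Once block-by-block agreement is shown, the rightmost uncanceled `$)$' and leftmost uncanceled `$($' correspond on the two sides, and replacing the pointed-to entry in $T$ (together with the column insertion/deletion rule of Definition~\ref{def:T-ops}) produces precisely the $\Psi$-image of the modified Kostant partition as in Definition~\ref{def:KPops}.

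The main obstacle is the case $i=n$, where the combinatorics differ substantially from the case $i<n$. In type $B_n$ the doubled block structure in $\Phi_n^B$ interacts with the $\mybox{0}$ box (which contributes singly rather than doubly), and I would need to confirm that the pairings in rules~(1)--(3) give exactly the multiplicities appearing in $S_n(\Psi(T))$. In type $C_n$ the four subcases of $e_n$ and $f_n$ in Definition~\ref{def:KPops} --- which promote or demote $(\gamma_{k,n})\leftrightarrow(\gamma_{k,k})$ in pairs, or introduce $(\gamma_{n,n})$ --- must each be matched with an explicit tableau move: the conversion between an unpaired $\mybox{\overline{n}}$ in row $j$ and a paired $(\mybox{n},\mybox{\overline{n}})$, or the column-insertion rule in row $n$. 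I expect each type $C_n$ subcase to require its own small diagram, and collecting these into a short table to verify against the recipe is where the bulk of the remaining work lies.
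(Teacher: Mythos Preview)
Your proposal is correct and follows essentially the same route as the paper: reduce to showing $\Psi(f_iT)=f_i\Psi(T)$, use Proposition~\ref{prop:row-reading} so that both $\BR_i(T)$ and $S_i(\Psi(T))$ factor row-by-row, and then carry out a finite case analysis on each row, with the $i=n$ case (and in particular the type~$C_n$ subcases of Definition~\ref{def:KPops}) treated separately. The only organisational difference is that the paper packages the row analysis as two lemmas (one for $i<n$, one for $i=n$) asserting directly that $f_i^\Kp\Psi(R_j)=\Psi(f_i^\TT R_j)$ for a tableau with a single nontrivial row, rather than first matching reduced bracket strings and then reading off the action; your formulation would require you to also verify that the reduced $\BR_i^c(R_j)$ and $S_i^c(\Psi(R_j))$ have equal numbers of `)' and `(' in every case (so that the leftmost `(' lands in the same row on both sides), which the paper's lemmas establish implicitly along the way.
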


The proof of Theorem \ref{isom_BC} will occupy the rest of this section.  

\begin{Example}\label{ex:typeBisom}
	Let $T$ be the marginally large tableau of type $B_3$ from Example \ref{ex:BefAction}. By Theorem \ref{isom_BC}, 
\[
	\Psi(T) 
	= 2(\beta_{1,1}) + (\beta_{1,2})+ (\beta_{1,3})+2(\gamma_{1,3})+2(\gamma_{1,2})+3(\beta_{2,2})+(\beta_{2,3})+2(\gamma_{2,3})+4(\beta_{1,3}).
\]
Then 
	\[
	\arraycolsep=4pt
	\begin{array}{cccccccccl}
	& c_{\beta_{1,3}} & 2c_{\beta_{1,2}} & 2c_{\gamma_{1,3}} & c_{\beta_{1,3}} & c_{\beta_{2,3}} & 2c_{\beta_{2,2}} & 2c_{\gamma_{2,3}} & c_{\beta_{3,3}} \\[2pt]
	S_3(\Psi(T))=& ) & {\color{red}((}  & {\color{red}))})) & {\color{red}(} & {\color{red}(}  & {\color{red} ((}   {\color{red}((((} & {\color{red}))))} & {\color{red}) )}  {\color{red} ))}\\
	S_3^c(\Psi(T))=& ) &   & )\color{blue}{)} &  &   &    &  & & ,
	\end{array}
	\] 
	so $f_3\Psi(T)=\Psi(T)+(\beta_{3,3})$, which agrees with 
	\[
	  	\Psi(f_3 T)
	= 2(\beta_{1,1}) + (\beta_{1,2})+ (\beta_{1,3})+2(\gamma_{1,3})+2(\gamma_{1,2})+3(\beta_{2,2})+(\beta_{2,3})+2(\gamma_{2,3})+5(\beta_{1,3}).
\]
\end{Example}

\begin{Example}
Consider type $C_3$ and
	\[
	T = \begin{tikzpicture}[baseline]
	\matrix [tab] 
	{
		\node[draw,fill=gray!30]{1}; & 
		\node[draw,fill=gray!30]{1}; & 
		\node[draw,fill=gray!30]{1}; & 
		\node[draw,fill=gray!30]{1}; & 
		\node[draw,fill=gray!30]{1}; & 
		\node[draw,fill=gray!30]{1}; & 
		\node[draw,fill=gray!30]{1}; & 
		\node[draw]{2}; & 
		\node[draw]{2}; & 
		\node[draw]{3}; &
		\node[draw]{3}; & 
		\node[draw]{3}; &
		\node[draw]{3}; &
		\node[draw]{\overline 3}; & 
		\node[draw]{\overline 3}; & 
		\node[draw]{\overline 2};& 
		\node[draw]{\overline 2};\\
		\node[draw,fill=gray!30]{2}; &
		\node[draw,fill=gray!30]{2}; & 
		\node[draw,fill=gray!30]{2}; & 
		\node[draw]{3}; & 
		\node[draw]{\overline 3}; & 
		\node[draw]{\overline 3}; & \\
		\node[draw,fill=gray!30]{3}; &
		\node[draw]{\overline 3}; \\
	};
	\end{tikzpicture}\ .
	\]
	Then
	\[
	\arraycolsep=2pt
	\begin{array}{rcccccccccccccccccccccccccccc}
	\rw{ME}(T)=& \overline 2 & \overline 2 & \overline 3 & \overline3 & 3 &3 & 3 & 3 & 2 & 2 & 1 &1 &1 &1 &1 &1 &1 
	&\overline 3 & \overline3 & 3 &2 &2 &2 
	&\overline 3 & 3\\
	\BR_3(T)=& & & ) & ) & ( & ( & {\color{red}(} & {\color{red}(} & &  & & & & & & &
	&{\color{red})} & {\color{red})} & {\color{red}(} & & & 
	&{\color{red})} & (\\
	\BR_3^c(T)=& & & ) & ) & {\color{blue}(} & ( & & & & & & & & & & & & & & & & & & & ( & ,
	\end{array}
	\]
	so
	\[
	f_3T =\begin{tikzpicture}[baseline]
	\matrix [tab] 
	{
		\node[draw,fill=gray!30]{1}; & 
		\node[draw,fill=gray!30]{1}; & 
		\node[draw,fill=gray!30]{1}; & 
		\node[draw,fill=gray!30]{1}; & 
		\node[draw,fill=gray!30]{1}; & 
		\node[draw,fill=gray!30]{1}; & 
		\node[draw,fill=gray!30]{1}; & 
		\node[draw]{2}; & 
		\node[draw]{2}; & 
		\node[draw]{3}; &
		\node[draw]{3}; & 
		\node[draw]{3}; &
		\node[draw]{\overline 3}; &
		\node[draw]{\overline 3}; & 
		\node[draw]{\overline 3}; & 
		\node[draw]{\overline 2};& 
		\node[draw]{\overline 2};\\
		\node[draw,fill=gray!30]{2}; &
		\node[draw,fill=gray!30]{2}; & 
		\node[draw,fill=gray!30]{2}; & 
		\node[draw]{3}; & 
		\node[draw]{\overline 3}; & 
		\node[draw]{\overline 3}; & \\
		\node[draw,fill=gray!30]{3}; &
		\node[draw]{\overline 3}; \\
	};
	\end{tikzpicture}\ .
	\]
	We now apply the isomorphism from Theorem \ref{isom_BC} to $T$ and $f_3 T$ to get 
	\begin{align*}
	\label{ex:typeCisom}
	\Psi(T)	& = 4(\beta_{1,2})+ 2(\gamma_{1,1})+ 2(\gamma_{1,3})+ (\gamma_{2,2})+ (\gamma_{2,3}) + (\gamma_{3,3}), \text{ and }\\
	\Psi (f_3 T) 
	& = 2(\beta_{1,2})+ 3(\gamma_{1,1})+ 2(\gamma_{1,3})+ (\gamma_{2,2})+ (\gamma_{2,3}) + (\gamma_{3,3}).
	\end{align*}
	Note that these are the same Kostant partitions as in Example \ref{ex:KPopsC_2f3e}.   Hence 
	\[
	f_3\Psi(T)=\Psi(T)-2(\beta_{1,2})+(\gamma_{1,1}) =\Psi(f_3 T).
	\]
\end{Example}

Denote by $e_i^\TT$ and $f_i^\TT$ the Kashiwara operators on $\TT(\infty)$ from Definition \ref{def:T-ops}, and $e_i^\Kp$ and $f_i^\Kp$ as the operators on $\Kp(\infty)$ from Definition \ref{def:KPops}.    

\begin{Lemma}\label{lem:onerow_i} 
Fix $i \in I\setminus\{n\}$ and a row index $j$. 
Let $T \in \TT(\infty)$ be such that the only unshaded boxes occur in row $j$. If the leftmost `$($' in $\BR_i^c(T)$ comes from $R_j$, then $f_i^\Kp\Psi(T) = \Psi (f_i^{\TT}T )$.  
\end{Lemma}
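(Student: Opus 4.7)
The plan is to case-split on which box in $R_j$ contributes the leftmost `$($' in $\BR_i^c(T)$ and to verify $f_i^\Kp\Psi(T)=\Psi(f_i^\TT T)$ in each case by direct computation. The analysis exploits the row-factorization of $\BR_i$ under the Middle-Eastern reading developed in the proof of Proposition \ref{prop:row-reading}.

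First I describe $\BR_i(T)$ explicitly. Because every row other than $R_j$ is entirely shaded, only rows $i$ and $i+1$ contribute brackets outside of $R_j$: row $i$ gives $(^{\ell_{i,i}}$ and row $i+1$ gives $)^{\ell_{i+1,i+1}}$, with $\ell_{i,i}=\ell_{i+1,i+1}+1$ by marginal largeness. Row $R_j$, read right-to-left, contributes the block $)^{c_{\overline{i}}}\,(^{c_{\overline{i+1}}}\,)^{c_{i+1}}\,(^{c_i}$, where $c_k$ and $c_{\overline{k}}$ count the unshaded $\mybox{k}$ and $\mybox{\overline{k}}$ in $R_j$ (with $c_i$ including the shaded $\mybox{i}$'s when $j=i$). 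A direct bracket-matching argument shows that the hypothesis on the leftmost `$($' forces $j\le i$; the cases $j=i+1$ and $j>i+1$ are vacuous, since then every uncanceled `$($' originates in row $i$ or does not exist. The surviving possibilities split as (Case A) the leftmost `$($' comes from an unpaired $\mybox{\overline{i+1}}$ in $R_j$ (when $c_{\overline{i+1}}>c_{i+1}$), or (Case B) it comes from a $\mybox{i}$ in $R_j$.

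In Case A, $f_i^\TT$ replaces the rightmost such $\mybox{\overline{i+1}}$ by $\mybox{\overline{i}}$; using the pairing rules of Theorem \ref{isom_BC} together with the identities $\gamma_{j,i}=\gamma_{j,i+1}+\alpha_i$ and $\beta_{j,i}=\beta_{j,i-1}+\alpha_i$, the change in $\Psi$ equals $-(\delta)+(\delta+\alpha_i)$ with $\delta=\gamma_{j,i+1}$ when no unpaired $\mybox{i}$ exists in $R_j$ and $\delta=\beta_{j,i-1}$ otherwise. In Case B with $j<i$, $f_i^\TT$ changes a $\mybox{i}$ in $R_j$ to $\mybox{i+1}$ (no column insertion) and an analogous computation gives $-(\delta')+(\delta'+\alpha_i)$ for some $\delta'\in\{\beta_{j,i-1},\gamma_{j,i}\}$ depending on pairing. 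In Case B with $j=i$, $f_i^\TT$ changes a shaded $\mybox{i}$ to $\mybox{i+1}$ and inserts a fresh shaded column to its left, producing $\Psi(f_i^\TT T)=\Psi(T)+(\beta_{i,i})=\Psi(T)+(\alpha_i)$.

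On the Kostant side, since only $R_j$ contributes roots whose first index equals $j$, all blocks of the sequence in Definition \ref{def:KPisignature} other than the $p=j$ block (and the terminal $\beta_{i,i}$ term) are empty, so computing $S_i(\Psi(T))$ reduces to analyzing a short four-bracket block followed by a tail. I verify that the leftmost uncanceled `$($' in $S_i^c(\Psi(T))$ corresponds to exactly the root $\delta$ or $\delta'$ found above, so $f_i^\Kp\Psi(T)=\Psi(T)-(\delta)+(\delta+\alpha_i)=\Psi(f_i^\TT T)$; in the column-insertion sub-case $S_i^c(\Psi(T))$ contains no `$($', so $f_i^\Kp\Psi(T)=\Psi(T)+(\alpha_i)=\Psi(f_i^\TT T)$ via the no-`$($' clause of Definition \ref{def:KPops}. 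The main obstacle is the bookkeeping in Case A: aligning the cancellation patterns of $\BR_i$ and $S_i$ through the pairing decomposition, and handling the boundary sub-case $i=n-1$, where $k=i+1=n$ invokes the special pairing rules (1)--(5) of Theorem \ref{isom_BC} rather than the generic rule (7).
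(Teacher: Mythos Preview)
Your approach is the same as the paper's---case-split on the box carrying the leftmost `(' and compute directly---and the paper organizes the identical computation by the four-way comparison of $p=c_{\overline{i+1}},\,q=c_{i+1},\,r=c_i,\,s=c_{\overline{i}}$. But there is a genuine gap, plus one slip.

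\textbf{The gap: Case A with $j=i$.} You split Case B into $j<i$ and $j=i$, but you do not split Case A. Case A with $j=i$ is possible (take $c_{\overline{i}}<c_{\overline{i+1}}-c_{i+1}$, so the leftmost `(' in $\BR_i(R_i)$ comes from an $\overline{i+1}$ rather than a shaded $i$), and your generic Case A formula fails there. The identity $\gamma_{j,i}=\gamma_{j,i+1}+\alpha_i$ you invoke requires $j<i$; when $j=i$ the new box $\mybox{\overline{i}}$ is governed by rule (6) of Theorem \ref{isom_BC}, which sends it to $(\beta_{i,i})+(\gamma_{i,i+1})$, not to $(\gamma_{i,i})$. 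The correct change in $\Psi$ is simply $+(\beta_{i,i})$, not $-(\gamma_{i,i+1})+(\gamma_{i,i+1}+\alpha_i)$. On the Kostant side your matching claim also breaks: $\gamma_{i,i+1}\notin\Phi_i$, so it contributes no bracket to $S_i$; in fact the only root with first index $i$ in $\Phi_i$ is $\beta_{i,i}$, whence $S_i(\Psi(R_i))=)^{c_{\beta_{i,i}}}$ has no `(', and $f_i^{\Kp}$ acts by the no-`(' clause to add $(\beta_{i,i})$. The final equality still holds, but for a different reason than you state. The paper handles precisely this in its Cases 5 and 6.

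\textbf{The slip.} In Case B with $j<i$ you write $\delta'\in\{\beta_{j,i-1},\gamma_{j,i}\}$. It should be $\gamma_{j,i+1}$: when $c_i\le c_{\overline{i}}$, converting an $i$ to $i+1$ breaks an $(i,\overline{i})$ pair (losing $(\beta_{j,i})+(\gamma_{j,i+1})$), frees an $\overline{i}$ (gaining $(\gamma_{j,i})$), and creates an unpaired $i+1$ (gaining $(\beta_{j,i})$), for a net change $-(\gamma_{j,i+1})+(\gamma_{j,i})$. Note $\gamma_{j,i}+\alpha_i$ is not a root, so $\delta'=\gamma_{j,i}$ cannot be right.

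Finally, the $i=n-1$ boundary you flag is harmless once you observe (as the paper implicitly uses) that the images of the $(n,\overline{n})$ pairs under rules (1)--(5) lie outside $\Phi_{n-1}$ and hence contribute nothing to $S_{n-1}$; you should say this rather than leave it as an ``obstacle.''
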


\begin{proof}
First consider $i\in\{1,\dots,n-1\}$ and row $R_j$ for $j<i$.  We are only interested in boxes which give rise to brackets in $\BR_i(R_j)$ or $S_i\bigl(\Psi(R_j)\bigr)$.  Following Definition \ref{def:T-ops} these boxes are the pairs $\Bigl(\mylongbox{i-1}, \mylongbox{\overline{\imath-1}}\Bigr)$ and the $\mybox{i}$, $\mylongbox{i+1}$, $\mylongbox{\overline{\imath+1}}$, and $\mybox{\overline\imath}$.
 
A pair $\Bigl(\mylongbox{i-1},\mylongbox{\overline{\imath-1}}\Bigr)$ corresponds to no brackets in $\BR_i(R_j)$, and to $(\beta_{j,i-1})$, $(\gamma_{j,i})$ in $\Psi(R_j)$, corresponding to a canceling pair of brackets in $S_i\bigl(\Psi(R_j)\bigr)$. 
So the statement is true if and only if it is true with these removed. 
Thus we can assume $R_j$ has no such pairs. 

Now, assume row $j$ of $T$ has
$p$, $q$, $r$, and $s$ boxes of $\overline{\imath+1}$, $i+1$, $i$, and $\overline{\imath}$ respectively: 
\[
R_j = 
\begin{tikzpicture}[baseline=-4,xscale=.75,yscale=.65]
\def\pt{.5}

\draw[-] (1-\pt,-\pt) -- (1-\pt,\pt) -- (13+\pt,\pt) -- (13+\pt,-\pt) -- cycle;
\foreach \x in {1.5,2.5,...,12.5}
{\draw[-] (\x,\pt) -- (\x,-\pt);}
\node at (1,0) {$i$};
\node at (2,0) {$\cdots$};
\node at (3,0) {$i$};
\node[font=\scriptsize] at (4,0) {$i+1$};
\node at (5,0) {$\cdots$};
\node[font=\scriptsize] at (6,0) {$i+1$};
\node at (7,0) {$\cdots$};
\node[font=\scriptsize] at (8,0) {$\overline{\imath+1}$};
\node at (9,0) {$\cdots$};
\node[font=\scriptsize] at (10,0) {$\overline{\imath+1}$};
\node at (11,0) {$\overline{\imath}$};
\node at (12,0) {$\cdots$};
\node at (13,0) {$\overline{\imath}$};
\draw [thick,decorate,decoration={brace,amplitude=6pt,mirror,raise=3pt}]
(0.6,-\pt) -- (3.4,-\pt) node[black,midway,yshift=-0.5cm,font=\scriptsize] {$r$};
\draw [thick,decorate,decoration={brace,amplitude=6pt,mirror,raise=3pt}]
(3.6,-\pt) -- (6.4,-\pt) node[black,midway,yshift=-0.5cm,font=\scriptsize] {$q$};
\draw [thick,decorate,decoration={brace,amplitude=6pt,mirror,raise=3pt}]
(7.6,-\pt) -- (10.4,-\pt) node[black,midway,yshift=-0.5cm,font=\scriptsize] {$p$};
\draw [thick,decorate,decoration={brace,amplitude=6pt,mirror,raise=3pt}]
(10.6,-\pt) -- (13.4,-\pt) node[black,midway,yshift=-0.5cm,font=\scriptsize] {$s$};
\end{tikzpicture}
\]
Define $\Psi(R_j)=\sum_{(\beta)\in \R} c_{\beta}(\beta)$.  The general bracketing sequences for both are
\[
\BR_i(R_j)=\  )^s\  (^p\  )^q\  (^r\ , 
\ \ \ \text{ and } \ \ \ 
S_i\bigl(\Psi(R_j)\bigr) =\
)^{c_{\beta_{j,i}}} (^{c_{\beta_{j,i-1}}} )^{c_{\gamma_{j,i}}} (^{c_{\gamma_{j,i+1}}}.
\]

{\bf Case 1: $\boldsymbol{p>q}$, $\boldsymbol{r>s}$ and $\boldsymbol{1\le j<i<n}$.}  
By the definition of $\Psi$,
\begin{align*}
\Psi(R_j) &= q (\beta_{j,i+1}+\gamma_{j,i+2}) + (p-q)(\gamma_{j,i+1}) + (r-s)(\beta_{j,i-1}) + s(\beta_{j,i}+\gamma_{j,i+1})\\
&=(r-s)(\beta_{j,i-1})+s(\beta_{j,i})+q(\beta_{j,i+1})+(s+p-q)(\gamma_{j,i+1})+q(\gamma_{j,i+2}).
\end{align*}
Calculating the action of $f_i^\TT$ on $R_j$ gives
\[
f_i^\TT R_j = 
\begin{tikzpicture}[baseline=-4,xscale=.75,yscale=.65]
\def\pt{.5}
%\filldraw[color=gray!30] (1-\pt,-\pt) -- (1-\pt,\pt) -- (3.5,\pt) -- (3.5,-\pt) -- cycle;
\draw[-] (1-\pt,-\pt) -- (1-\pt,\pt) -- (13+\pt,\pt) -- (13+\pt,-\pt) -- cycle;
\foreach \x in {1.5,2.5,...,12.5}
{\draw[-] (\x,\pt) -- (\x,-\pt);}
\node at (1,0) {$i$};
\node at (2,0) {$\cdots$};
\node at (3,0) {$i$};
\node[font=\scriptsize] at (4,0) {$i+1$};
\node at (5,0) {$\cdots$};
\node[font=\scriptsize] at (6,0) {$i+1$};
\node at (7,0) {$\cdots$};
\node[font=\scriptsize] at (8,0) {$\overline{\imath+1}$};
\node at (9,0) {$\cdots$};
\node[font=\scriptsize] at (10,0) {$\overline{\imath+1}$};
\node at (11,0) {$\overline{\imath}$};
\node at (12,0) {$\cdots$};
\node at (13,0) {$\overline{\imath}$};
\draw [thick,decorate,decoration={brace,amplitude=6pt,mirror,raise=3pt}]
(0.6,-\pt) -- (3.4,-\pt) node[black,midway,yshift=-0.5cm,font=\scriptsize] {$r$};
\draw [thick,decorate,decoration={brace,amplitude=6pt,mirror,raise=3pt}]
(3.6,-\pt) -- (6.4,-\pt) node[black,midway,yshift=-0.5cm,font=\scriptsize] {$q$};
\draw [thick,decorate,decoration={brace,amplitude=6pt,mirror,raise=3pt}]
(7.6,-\pt) -- (10.4,-\pt) node[black,midway,yshift=-0.5cm,font=\scriptsize] {$p-1$};
\draw [thick,decorate,decoration={brace,amplitude=6pt,mirror,raise=3pt}]
(10.6,-\pt) -- (13.4,-\pt) node[black,midway,yshift=-0.5cm,font=\scriptsize] {$s+1$};
\end{tikzpicture}\ .
\]
Then
\begin{align*}
\Psi(f^\TT_iR_j) &= q (\beta_{j,i+1}+\gamma_{j,i+2}) + (p-q-1)(\gamma_{j,i+1}) + (r-s-1)(\beta_{j,i-1}) + (s+1)(\beta_{j,i}+\gamma_{j,i+1})\\
&=(r-s-1)(\beta_{j,i-1})+(s+1)(\beta_{j,i})+q(\beta_{j,i+1})+(s+p-q)(\gamma_{j,i+1})+q(\gamma_{j,i+2}).
\end{align*}
We now apply the operator $f_i^{\Kp}$ to $\Psi(R_j)$ to show equivalence.  In $S_i^c\bigl(\Psi(R_j)\bigr)$ the leftmost `(' corresponds to $\beta_{j,i-1}$ so 
\begin{align*}
f_i^{\Kp}\Psi(R_j)&= (r-s-1) (\beta_{j,i-1}) + (s+1)(\beta_{j,i}) + q (\beta_{j,i+1}) + q(\gamma_{j,i+2}) + (s+p-q)(\gamma_{j,i+1}) \\
&= \Psi(f_i^\TT R_j).
\end{align*}

{\bf Case 2: $\boldsymbol{p>q}$, $\boldsymbol{r\leq s}$, and $\boldsymbol{1\le j<i<n}$.}  By the definition of $\Psi$,
\begin{align*}
\Psi(R_j) &= q (\beta_{j,i+1}+\gamma_{j,i+2}) + (p-q)(\gamma_{j,i+1}) + (s-r)(\gamma_{j,i}) + r(\beta_{j,i}+\gamma_{j,i+1})\\
&=r(\beta_{j,i})+q(\beta_{j,i+1})+q(\gamma_{j,i+2})+(r+p-q)(\gamma_{j,i+1})+(s-r)(\gamma_{j,i}).
\end{align*}
By the definition of $f_i^\TT$, we have  
\[
f_i^\TT R_j = 
\begin{tikzpicture}[baseline=-4,xscale=.75,yscale=.65]
\def\pt{.5}
%\filldraw[color=gray!30] (1-\pt,-\pt) -- (1-\pt,\pt) -- (3.5,\pt) -- (3.5,-\pt) -- cycle;
\draw[-] (1-\pt,-\pt) -- (1-\pt,\pt) -- (13+\pt,\pt) -- (13+\pt,-\pt) -- cycle;
\foreach \x in {1.5,2.5,...,12.5}
{\draw[-] (\x,\pt) -- (\x,-\pt);}
\node at (1,0) {$i$};
\node at (2,0) {$\cdots$};
\node at (3,0) {$i$};
\node[font=\scriptsize] at (4,0) {$i+1$};
\node at (5,0) {$\cdots$};
\node[font=\scriptsize] at (6,0) {$i+1$};
\node at (7,0) {$\cdots$};
\node[font=\scriptsize] at (8,0) {$\overline{\imath+1}$};
\node at (9,0) {$\cdots$};
\node[font=\scriptsize] at (10,0) {$\overline{\imath+1}$};
\node at (11,0) {$\overline{\imath}$};
\node at (12,0) {$\cdots$};
\node at (13,0) {$\overline{\imath}$};
\draw [thick,decorate,decoration={brace,amplitude=6pt,mirror,raise=3pt}]
(0.6,-\pt) -- (3.4,-\pt) node[black,midway,yshift=-0.5cm,font=\scriptsize] {$r$};
\draw [thick,decorate,decoration={brace,amplitude=6pt,mirror,raise=3pt}]
(3.6,-\pt) -- (6.4,-\pt) node[black,midway,yshift=-0.5cm,font=\scriptsize] {$q$};
\draw [thick,decorate,decoration={brace,amplitude=6pt,mirror,raise=3pt}]
(7.6,-\pt) -- (10.4,-\pt) node[black,midway,yshift=-0.5cm,font=\scriptsize] {$p-1$};
\draw [thick,decorate,decoration={brace,amplitude=6pt,mirror,raise=3pt}]
(10.6,-\pt) -- (13.4,-\pt) node[black,midway,yshift=-0.5cm,font=\scriptsize] {$s+1$};
\end{tikzpicture}\
.\]
Then
\begin{align*}
\Psi(f^\TT_iR_j) &= q (\beta_{j,i+1}+\gamma_{j,i+2}) + (p-q-1)(\gamma_{j,i+1}) + (s-r+1)(\gamma_{j,i}) + r(\beta_{j,i}+\gamma_{j,i+1})\\
&=r(\beta_{j,i})+q (\beta_{j,i+1})+q(\gamma_{j,i+2})+(r+p-q-1)(\gamma_{j,i+1})+(s-r+1)(\gamma_{j,i}).
\end{align*}
On the other hand, in $S_i^c\bigl(\Psi(R_j)\bigr)$ the leftmost `(' corresponds to $\gamma_{j,i+1}$ so
\begin{align*}
f_i^{\Kp}\Psi(R_j)&=r(\beta_{j,i})+q(\beta_{j,i+1})+q(\gamma_{j,i+2})+(r+p-q-1)(\gamma_{j,i+1}) +(s-r+1)(\gamma_{j,i})\\
&= \Psi(f_i^\TT R_j).
\end{align*}
Furthermore,
\[
\BR_i(R_j) =\  )^s\  (^{p-q}\   (^r\  
\quad
\text{ and }
\quad
S_i\bigl(\Psi(R_j)\bigr) =\  )^r\ )^{s-r}\ (^{r+p-q},
\]
so both $\BR_i^c(R_j) $ and $S_i^c\bigl(\Psi(R_j)\bigr)$ have $s$ `)' and $r+p-q$ `('.

{\bf Case 3: $\boldsymbol{p\leq q}$, $\boldsymbol{r>s}$, and $\boldsymbol{1\le j<i<n}$.}  By the definition of $\Psi$,
\begin{align*}
\Psi(R_j) &= p (\beta_{j,i+1}+\gamma_{j,i+2}) + (q-p)(\beta_{j,i}) + (r-s)(\beta_{j,i-1}) + s(\beta_{j,i}+\gamma_{j,i+1})\\
&=(r-s)(\beta_{j,i-1})+(s+q-p)(\beta_{j,i})+p(\beta_{j,i+1})+p(\gamma_{j,i+2})+s(\gamma_{j,i+1}).
\end{align*}
By the definition of $f_i^\TT$, we have  
\[
f_i^\TT R_j = 
\begin{tikzpicture}[baseline=-4,xscale=.75,yscale=.65]
\def\pt{.5}
%\filldraw[color=gray!30] (1-\pt,-\pt) -- (1-\pt,\pt) -- (3.5,\pt) -- (3.5,-\pt) -- cycle;
\draw[-] (1-\pt,-\pt) -- (1-\pt,\pt) -- (13+\pt,\pt) -- (13+\pt,-\pt) -- cycle;
\foreach \x in {1.5,2.5,...,12.5}
{\draw[-] (\x,\pt) -- (\x,-\pt);}
\node at (1,0) {$i$};
\node at (2,0) {$\cdots$};
\node at (3,0) {$i$};
\node[font=\scriptsize] at (4,0) {$i+1$};
\node at (5,0) {$\cdots$};
\node[font=\scriptsize] at (6,0) {$i+1$};
\node at (7,0) {$\cdots$};
\node[font=\scriptsize] at (8,0) {$\overline{\imath+1}$};
\node at (9,0) {$\cdots$};
\node[font=\scriptsize] at (10,0) {$\overline{\imath+1}$};
\node at (11,0) {$\overline{\imath}$};
\node at (12,0) {$\cdots$};
\node at (13,0) {$\overline{\imath}$};
\draw [thick,decorate,decoration={brace,amplitude=6pt,mirror,raise=3pt}]
(0.6,-\pt) -- (3.4,-\pt) node[black,midway,yshift=-0.5cm,font=\scriptsize] {$r-1$};
\draw [thick,decorate,decoration={brace,amplitude=6pt,mirror,raise=3pt}]
(3.6,-\pt) -- (6.4,-\pt) node[black,midway,yshift=-0.5cm,font=\scriptsize] {$q+1$};
\draw [thick,decorate,decoration={brace,amplitude=6pt,mirror,raise=3pt}]
(7.6,-\pt) -- (10.4,-\pt) node[black,midway,yshift=-0.5cm,font=\scriptsize] {$p$};
\draw [thick,decorate,decoration={brace,amplitude=6pt,mirror,raise=3pt}]
(10.6,-\pt) -- (13.4,-\pt) node[black,midway,yshift=-0.5cm,font=\scriptsize] {$s$};
\end{tikzpicture}\ 
.\]
Then
\begin{align*}
\Psi(f^\TT_iR_j) &= p (\beta_{j,i+1}+\gamma_{j,i+2}) + (q-p+1)(\beta_{j,i}) + (r-s-1)(\beta_{j,i-1}) + s(\beta_{j,i}+\gamma_{j,i+1})\\
&=(r-s-1)(\beta_{j,i-1})+(s+q-p+1)(\beta_{j,i})+p(\beta_{j,i+1})+p(\gamma_{j,i+2})+s(\gamma_{j,i+1}).
\end{align*}
On the other hand, in $S_i^c\bigl(\Psi(R_j)\bigr)$ the leftmost `(' corresponds to $\beta_{j,i-1}$ so
\begin{align*}
f_i^{\Kp}\Psi(R_j)&=(r-s-1)(\beta_{j,i-1})+(s+q-p+1)(\beta_{j,i})+p(\beta_{j,i+1})+p(\gamma_{j,i+2})+s(\gamma_{j,i+1}) \\
&= \Psi(f_i^\TT R_j).
\end{align*}

{\bf Case 4: $\boldsymbol{p\leq q}$, $\boldsymbol{r\leq s}$, and $\boldsymbol{1\le j<i<n}$.}  By the definition of $\Psi$,
\begin{align*}
\Psi(R_j) &= p (\beta_{j,i+1}+\gamma_{j,i+2}) + (q-p)(\beta_{j,i}) + (s-r)(\gamma_{j,i}) + r(\beta_{j,i}+\gamma_{j,i+1})\\
&=(r+q-p)(\beta_{j,i})+p(\beta_{j,i+1})+p(\gamma_{j,i+2})+r(\gamma_{j,i+1})+(s-r)(\gamma_{j,i}).
\end{align*}
If $r=0$, then $f_i$ will act on the rightmost $\mybox{i}$ in $R_i$ of $T$ (see Case 6 for details on this situation).  When $r>0$, by the definition of $f_i^\TT$, we have  
\[
f_i^\TT R_j = 
\begin{tikzpicture}[baseline=-4,xscale=.75,yscale=.65]
\def\pt{.5}
%\filldraw[color=gray!30] (1-\pt,-\pt) -- (1-\pt,\pt) -- (3.5,\pt) -- (3.5,-\pt) -- cycle;
\draw[-] (1-\pt,-\pt) -- (1-\pt,\pt) -- (13+\pt,\pt) -- (13+\pt,-\pt) -- cycle;
\foreach \x in {1.5,2.5,...,12.5}
{\draw[-] (\x,\pt) -- (\x,-\pt);}
\node at (1,0) {$i$};
\node at (2,0) {$\cdots$};
\node at (3,0) {$i$};
\node[font=\scriptsize] at (4,0) {$i+1$};
\node at (5,0) {$\cdots$};
\node[font=\scriptsize] at (6,0) {$i+1$};
\node at (7,0) {$\cdots$};
\node[font=\scriptsize] at (8,0) {$\overline{\imath+1}$};
\node at (9,0) {$\cdots$};
\node[font=\scriptsize] at (10,0) {$\overline{\imath+1}$};
\node at (11,0) {$\overline{\imath}$};
\node at (12,0) {$\cdots$};
\node at (13,0) {$\overline{\imath}$};
\draw [thick,decorate,decoration={brace,amplitude=6pt,mirror,raise=3pt}]
(0.6,-\pt) -- (3.4,-\pt) node[black,midway,yshift=-0.5cm,font=\scriptsize] {$r-1$};
\draw [thick,decorate,decoration={brace,amplitude=6pt,mirror,raise=3pt}]
(3.6,-\pt) -- (6.4,-\pt) node[black,midway,yshift=-0.5cm,font=\scriptsize] {$q+1$};
\draw [thick,decorate,decoration={brace,amplitude=6pt,mirror,raise=3pt}]
(7.6,-\pt) -- (10.4,-\pt) node[black,midway,yshift=-0.5cm,font=\scriptsize] {$p$};
\draw [thick,decorate,decoration={brace,amplitude=6pt,mirror,raise=3pt}]
(10.6,-\pt) -- (13.4,-\pt) node[black,midway,yshift=-0.5cm,font=\scriptsize] {$s$};
\end{tikzpicture}\
.\]
Then
\begin{align*}
\Psi(f^\TT_i(R_j) &= p (\beta_{j,i+1}+\gamma_{j,i+2}) + (q-p+1)(\beta_{j,i}) + (s-r+1)(\gamma{j,i}) + (r-1)(\beta_{j,i}+\gamma_{j,i+1})\\
&=(r+q-p)(\beta_{j,i})+p(\beta_{j,i+1})+p(\gamma_{j,i+2})+(r-1)(\gamma_{j,i+1})+(s-r+1)(\gamma_{j,i}) .
\end{align*}
On the other hand, in $S_i^c\bigl(\Psi(R_j)\bigr)$ the leftmost `(' corresponds to $\gamma_{j,i+1}$ so
\begin{align*}
f_i^{\Kp}\Psi(R_j)&=(r+q-p)(\beta_{j,i})+p(\beta_{j,i+1})+p(\gamma_{j,i+2})+(r-1)(\gamma_{j,i+1})+(s-r+1)(\gamma_{j,i}) \\
&= \Psi(f_i^\TT R_j).
\end{align*}

We now establish the result for $i\in \{1,\dots,n-1\}$ and row $j=i$.  The general bracketing sequences for both are given here:
\begin{align*}
\BR_i(R_i)&=\  )^s\  (^p\  )^q\  (^r\ , & 
S_i\bigl(\Psi(R_i)\bigr)&=\
%\begin{array}{cc}
%\underbrace{)\cdots)}_{c_{\beta_{i,i}}}\\
%\end{array}.
)^{c_{\beta_{i,i}}}. 
\end{align*}
Following Definition \ref{def:KPops}, since there is no `$($' in $S_i\bigl(\Psi(R_i)\bigr)$ the action of $f_i^{\Kp}$ will always be to add $(\beta_{i,i})$.

{\bf Case 5: $\boldsymbol{p>q}$, and $\boldsymbol{1\le j=i<n}$.} If $p>q$, then the leftmost `$($' comes from an $\mylongbox{\overline{i+1}}$ so $f_i^{\TT}R_i$ sends an  $\mylongbox{\overline{i+1}}$ to a  $\mybox{\overline{i}}$. Since $p>q$ this does not change the number of $\left(\mylongbox{i+1},\mylongbox{\overline{i+1}}\right)$ pairs, so $\Psi(f_i^{\TT}R_i)=\Psi(R_i)+(\beta_{i,i})=f_i^\Kp\Psi(R_i)$.

{\bf Case 6: $\boldsymbol{p\leq q}$, and $\boldsymbol{1\le j=i<n}$.} If $p\leq q$, then the leftmost `$($' comes from a $\mybox{i}$ so $f_i^{\TT}R_i$ sends an $\mybox{i}$ to an $\mylongbox{i+1}$.  Since $p\leq q $ this does not change the number of $\left(\mylongbox{i+1},\mylongbox{\overline{i+1}}\right)$ pairs, so $\Psi(f_i^{\TT}R_i)=\Psi(R_i)+(\beta_{i,i})=f_i^\Kp\Psi(R_i)$.
\end{proof}

\begin{Lemma}\label{lem:onerow_n} 
	Fix a row index $j\in I$. 
	Let $T \in \TT(\infty)$ be such that the only unshaded boxes occur in row $j$. If the leftmost `$($' in $\BR_n^c(T)$ comes from $R_j$, then $f_n^\Kp\Psi(T) = \Psi (f_n^{\TT}T )$.  
\end{Lemma}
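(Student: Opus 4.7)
The strategy mirrors Lemma \ref{lem:onerow_i}: parametrize the unshaded content of $R_j$ by the multiplicities of the letters that can contribute to $\BR_n$, write $\Psi(R_j)$ in closed form, read off $S_n\bigl(\Psi(R_j)\bigr)$ from Definition \ref{def:KPisignature}, identify the leftmost uncanceled `(' on each side, and check that $f_n^\TT$ and $f_n^\Kp$ produce the same change. The only shaded box contributing to $\BR_n(T)$ is the single $\mybox{n}$ in row $n$, which produces one `(' placed at the very end of the Middle-Eastern reading but contributes nothing under $\Psi$; so the hypothesis that the leftmost `(' of $\BR_n^c(T)$ comes from $R_j$ reduces to saying that the block coming from $R_j$ already contains at least one `('. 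The new wrinkles relative to Lemma \ref{lem:onerow_i} are (i) the letter $\mybox{0}$ and the doubling factors $2c_{\beta_{j,n-1}}$, $2c_{\gamma_{j,n}}$ in type $B_n$; (ii) the three-way branching of $f_n^{\Kp}$ in type $C_n$; and (iii) the distinct treatment of the bottom row $j=n$.

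\textbf{Type $B_n$.} For $j<n$, each pair $\bigl(\mylongbox{n-1},\mylongbox{\overline{n-1}}\bigr)$ contributes no bracket to $\BR_n(R_j)$ and maps under $\Psi$ to $(\beta_{j,n-1})+(\gamma_{j,n})$, producing two `(' immediately followed by two `)' in $S_n$ which cancel; unpaired $\mylongbox{n-1}$ and $\mylongbox{\overline{n-1}}$ map to roots outside $\Phi_n^B$. So $R_j$ reduces to $r$ copies of $\mybox{n}$, $t\in\{0,1\}$ copies of $\mybox{0}$ (by Definition \ref{defMLT}(4)), and $v$ copies of $\mybox{\overline{n}}$; with $p=\min(r,v)$, $\Psi(R_j)=(2p+t)(\beta_{j,n})+(r-v)_+(\beta_{j,n-1})+(v-r)_+(\gamma_{j,n})$ where $x_+=\max(x,0)$. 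A direct calculation collapses the $j$-block of $S_n$ to $)^{2v+t}(^{2r+t}$, parallel to $\BR_n^c(R_j)=)^{v+t}(^{r+t}$. The leftmost `(' sits on $\beta_{j,n-1}$ when $r>v$ and on $\beta_{j,n}$ when $r\le v$; using $\beta_{j,n-1}+\alpha_n=\beta_{j,n}$ and $\beta_{j,n}+\alpha_n=\gamma_{j,n}$, the $f_n^\Kp$-move matches $\Psi(f_n^\TT R_j)$ in every sub-case. For $j=n$, row $n$ has only $v$ copies of $\mybox{\overline n}$ and at most one $\mybox{0}$, so $\Psi(R_n)=(2v+t)(\beta_{n,n})$; the $n$-block of $S_n$ is $)^{2v+t}$ with no `(', so $f_n^\Kp$ appends $(\beta_{n,n})=(\alpha_n)$, matching $\Psi(f_n^\TT R_n)$ whether $f_n^\TT$ promotes the $\mybox{0}$ or the shaded $\mybox{n}$ (with the attendant column insertion).

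\textbf{Type $C_n$.} Again pair-cancellation removes $\bigl(\mylongbox{n-1},\mylongbox{\overline{n-1}}\bigr)$ pairs, and unpaired $\mylongbox{n-1},\mylongbox{\overline{n-1}}$ map to roots outside $\Phi_n^C$. Let $r,v$ be the counts of $\mybox{n},\mybox{\overline{n}}$ in $R_j$ with $j<n$ and $p=\min(r,v)$; then $\Psi(R_j)=p(\gamma_{j,j})+(r-p)(\beta_{j,n-1})+(v-p)(\gamma_{j,n})$. Because $\gamma_{j,j}$ appears twice in $\Phi_n^C$, each $(\gamma_{j,j})$ contributes one `)' and one `(' bracketing the middle of the $j$-block, and a short computation reduces the block signature to $)^v(^r$, parallel to $\BR_n^c(R_j)$. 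The leftmost `(' rests on $\beta_{j,n-1}$ iff $r>v$, and on the second $\gamma_{j,j}$ iff $r\le v$. The three branching cases of $f_n^\Kp$ correspond to: rule (1) when $r=v+1$ (promoting a $\mybox n$ that pairs with an existing $\mybox{\overline n}$); rule (2) when $r>v+1$ (two $\beta_{j,n-1}$'s fuse into one $\gamma_{j,j}$); and rule (3) when $r\le v$ (one $\gamma_{j,j}$ splits into two $\gamma_{j,n}$'s, matching the promotion of one half of a pair). For $j=n$, $\Psi(R_n)=v(\gamma_{n,n})$ and the $n$-block of $S_n$ is $)^v$ with no `(', so $f_n^\Kp$ appends $(\gamma_{n,n})=(\alpha_n)$, matching $f_n^\TT$ promoting the shaded $\mybox{n}$ (with column insertion) to $\mybox{\overline n}$.

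\textbf{Main obstacle.} The delicate bookkeeping is in type $C_n$, where the three branching rules must be aligned precisely with the outcomes of $f_n^\TT$. The hardest sub-case is distinguishing rules (1) and (2) (both with leftmost `(' on $\beta_{j,n-1}$): one must verify that the boundary condition $c_{\gamma_{k,n}}=c_{\beta_{k,n-1}}-1$ of rule (1) corresponds exactly to $r=v+1$, and that the strict inequality $c_{\gamma_{k,n}}<c_{\beta_{k,n-1}}-1$ of rule (2) corresponds to $r>v+1$. Once this dictionary ``$v-p$ vs.\ $r-p-1$'' $\leftrightarrow$ ``$c_{\gamma_{k,n}}$ vs.\ $c_{\beta_{k,n-1}}-1$'' is in place, each sub-case is a short computation parallel to those of Lemma \ref{lem:onerow_i}.
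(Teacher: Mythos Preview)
Your approach is essentially identical to the paper's: reduce to the letters $n,0,\overline n$ (resp.\ $n,\overline n$) by cancelling $(n{-}1,\overline{n{-}1})$ pairs, parametrize the row by their multiplicities, and match the case analysis for $f_n^\TT$ against the rules for $f_n^\Kp$; the paper carries out exactly these cases (five in type $B_n$, four in type $C_n$) with the same conclusions.

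One small correction in type $B_n$: because $\varphi_n(\mybox{n})=\varepsilon_n(\mybox{\overline n})=2$ in the fundamental crystal, each $\mybox{n}$ contributes $(($ and each $\mybox{\overline n}$ contributes $))$ to $\BR_n$, so $\BR_n^c(R_j)=)^{2v+t}(^{2r+t}$ (not $)^{v+t}(^{r+t}$), and the shaded $\mybox{n}$ in row $n$ contributes $(($ rather than a single `$($'. This matches your computation of $S_n^c(\Psi(R_j))$ exactly and does not affect the identification of the leftmost `$($', so the argument goes through unchanged.
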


\begin{proof} 
Consider $f_n$ and $T$ to be of type $B_n$.  We need only consider the $\mybox{n}$, $\mybox{0}$, and $\mybox{\overline{n}}$ boxes, since a pair $\Bigl(\mylongbox{n-1},\mylongbox{\overline{n-1}}\Bigr)$ corresponds to no brackets in $\BR_n(R_j)$, and to $(\beta_{j,n-1})$, $(\gamma_{j,n})$ in $\Psi(R_j),$ which gives a canceling pair of brackets in $S_n\bigl(\Psi(R_j)\bigr)$.  Assume row $j$ of $T$ has $p$  $\mybox{\overline{n}}$ boxes, $z$ $\mybox{0}$ boxes, and $q$ $\mybox{n}$ boxes:
\[
R_j = 
\begin{tikzpicture}[baseline=-4,xscale=.75,yscale=.65]
\def\pt{.5}
%\filldraw[color=gray!30] (1-\pt,-\pt) -- (1-\pt,\pt) -- (3.5,\pt) -- (3.5,-\pt) -- cycle;
\draw[-] (1-\pt,-\pt) -- (1-\pt,\pt) -- (7+\pt,\pt) -- (7+\pt,-\pt) -- cycle;
\foreach \x in {1.5,2.5,...,6.5}
{\draw[-] (\x,\pt) -- (\x,-\pt);}
\node at (1,0) {$n$};
\node at (2,0) {$\cdots$};
\node at (3,0) {$n$};
\node at (4,0) {$0$};
\node at (5,0) {$\overline{n}$};
\node at (6,0) {$\cdots$};
\node at (7,0) {$\overline{n}$};
\draw [thick,decorate,decoration={brace,amplitude=6pt,mirror,raise=3pt}]
(0.6,-\pt) -- (3.4,-\pt) node[black,midway,yshift=-0.5cm,font=\scriptsize] {$q$};
\draw [thick,decorate,decoration={brace,amplitude=3pt,mirror,raise=3pt}]
(3.6,-\pt) -- (4.4,-\pt) node[black,midway,yshift=-0.5cm,font=\scriptsize] {$z$};
\draw [thick,decorate,decoration={brace,amplitude=6pt,mirror,raise=3pt}]
(4.6,-\pt) -- (7.4,-\pt) node[black,midway,yshift=-0.5cm,font=\scriptsize] {$p$};
\end{tikzpicture}\ .
\]
The bracketing sequences are:
\begin{align*}
\BR_n(R_j)&=\    )^{2p}\  )^z\ (^z\ (^{2q}\  , &
S_n\bigl(\Psi(T)\bigr)&=\
)^{c_{\beta_{j,n}}}  (^{2c_{\beta_{j,n-1}}}  )^{2c_{\gamma_{j,n}}}  (^{c_{\beta_{j,n}}}.
\end{align*}

{\bf Case 1: $\boldsymbol{p\geq q}$, $\boldsymbol{z=0}$, and $\boldsymbol{1\le j< n}$.} 

By the definition of $\Psi$,
\begin{align*}
\Psi(R_j) &= 2q (\beta_{j,n}) + (p-q)(\gamma_{j,n}).
\end{align*}
If $q=0$, then $f_n$ will act on the $\mybox{n}$ in $R_n$ of $T$ (see Case 5 for more details in this situation). If $q>0$ then, by the definition of $f_n^\TT$, 

\[
f_n^\TT R_j = 
\begin{tikzpicture}[baseline=-4,xscale=.75,yscale=.65]
\def\pt{.5}
\draw[-] (1-\pt,-\pt) -- (1-\pt,\pt) -- (7+\pt,\pt) -- (7+\pt,-\pt) -- cycle;
\foreach \x in {1.5,2.5,...,6.5}
{\draw[-] (\x,\pt) -- (\x,-\pt);}
\node at (1,0) {$n$};
\node at (2,0) {$\cdots$};
\node at (3,0) {$n$};
\node at (4,0) {$0$};
\node at (5,0) {$\overline{n}$};
\node at (6,0) {$\cdots$};
\node at (7,0) {$\overline{n}$};
\draw [thick,decorate,decoration={brace,amplitude=6pt,mirror,raise=3pt}]
(0.6,-\pt) -- (3.4,-\pt) node[black,midway,yshift=-0.5cm,font=\scriptsize] {$q-1$};
\draw [thick,decorate,decoration={brace,amplitude=3pt,mirror,raise=3pt}]
(3.6,-\pt) -- (4.4,-\pt) node[black,midway,yshift=-0.5cm,font=\scriptsize] {$1$};
\draw [thick,decorate,decoration={brace,amplitude=6pt,mirror,raise=3pt}]
(4.6,-\pt) -- (7.4,-\pt) node[black,midway,yshift=-0.5cm,font=\scriptsize] {$p$};
\end{tikzpicture}\ .
\]
Since $p\geq q$ there is one less $\left( \mybox{n},\mybox{\overline{n}}\right)$, one more $\mybox{0}$, and one more unpaired $\mybox{\overline{n}}$, so 
\begin{align*}
\Psi(f_n^\TT R_j) &= 2(q-1) (\beta_{j,n}) + (\beta_{j,n})+ (p-q+1)(\gamma_{j,n}),\\
&=(2q-1)(\beta_{j,n})+(p-q+1)(\gamma_{j,n}).
\end{align*}
On the other hand, in $S_n^c\bigl(\Psi(R_j)\bigr)$ the leftmost `(' corresponds to $\beta_{j,n}$ so 
\[
f_n^{\Kp}\Psi(R_j)= (2q-1) (\beta_{j,n}) + (p-q+1)(\gamma_{j,n}) = \Psi(f_n^\TT R_j).
\]

{\bf Case 2:  $\boldsymbol{p<q}$, $\boldsymbol{z=0}$, and $\boldsymbol{1\le j<n}$.}   
By the definition of $\Psi$,
\begin{align*}
\Psi(R_j) &= (q-p)(\beta_{j,n-1})+2p (\beta_{j,n}) .
\end{align*}
By the definition of $f_n^\TT$, we have 
\[
f_n^\TT R_j = 
\begin{tikzpicture}[baseline=-4,xscale=.75,yscale=.65]
\def\pt{.5}
\draw[-] (1-\pt,-\pt) -- (1-\pt,\pt) -- (7+\pt,\pt) -- (7+\pt,-\pt) -- cycle;
\foreach \x in {1.5,2.5,...,6.5}
{\draw[-] (\x,\pt) -- (\x,-\pt);}
\node at (1,0) {$n$};
\node at (2,0) {$\cdots$};
\node at (3,0) {$n$};
\node at (4,0) {$0$};
\node at (5,0) {$\overline{n}$};
\node at (6,0) {$\cdots$};
\node at (7,0) {$\overline{n}$};
\draw [thick,decorate,decoration={brace,amplitude=6pt,mirror,raise=3pt}]
(0.6,-\pt) -- (3.4,-\pt) node[black,midway,yshift=-0.5cm,font=\scriptsize] {$q-1$};
\draw [thick,decorate,decoration={brace,amplitude=3pt,mirror,raise=3pt}]
(3.6,-\pt) -- (4.4,-\pt) node[black,midway,yshift=-0.5cm,font=\scriptsize] {$1$};
\draw [thick,decorate,decoration={brace,amplitude=6pt,mirror,raise=3pt}]
(4.6,-\pt) -- (7.4,-\pt) node[black,midway,yshift=-0.5cm,font=\scriptsize] {$p$};
\end{tikzpicture}\ .
\]
The number of $\left(\mybox{n},\mybox{\overline{n}}\right)$ pairs is unchanged, there is one less unpaired $\mybox{n}$ and one more $\mybox{0}$, so
\begin{align*}
\Psi(f_n^\TT R_j) &= (q-p-1) (\beta_{j,n-1}) + (\beta_{j,n})+ 2(p)(\beta_{j,n})\\
&=(q-p-1)(\beta_{j,n-1})+(2p+1)(\beta_{j,n}).
\end{align*}
On the other hand, in $S_n^c\bigl(\Psi(R_j)\bigr)$ the leftmost `(' corresponds to $\beta_{j,n-1}$ so
\[
f_n^{\Kp}\Psi(R_j)= (q-p-1) (\beta_{j,n-1}) + (2p+1)(\beta_{j,j})= \Psi(f_n^\TT R_j).
\]

{\bf Case 3: $\boldsymbol{p \geq q}$, $\boldsymbol{z=1}$, and $\boldsymbol{1\le j<n}$. } 
By the definition of $\Psi$,
\begin{align*}
\Psi(R_j) &= (2q+1)(\beta_{j,n})+(p-q)(\gamma_{j,n}).
\end{align*}
By the definition of $f_n^\TT$, we have
\[
f_n^\TT R_j = 
\begin{tikzpicture}[baseline=-4,xscale=.75,yscale=.65]
\def\pt{.5}
\draw[-] (1-\pt,-\pt) -- (1-\pt,\pt) -- (7+\pt,\pt) -- (7+\pt,-\pt) -- cycle;
\foreach \x in {1.5,2.5,...,6.5}
{\draw[-] (\x,\pt) -- (\x,-\pt);}
\node at (1,0) {$n$};
\node at (2,0) {$\cdots$};
\node at (3,0) {$n$};
\node at (4,0) {$0$};
\node at (5,0) {$\overline{n}$};
\node at (6,0) {$\cdots$};
\node at (7,0) {$\overline{n}$};
\draw [thick,decorate,decoration={brace,amplitude=6pt,mirror,raise=3pt}]
(0.6,-\pt) -- (3.4,-\pt) node[black,midway,yshift=-0.5cm,font=\scriptsize] {$q$};
\draw [thick,decorate,decoration={brace,amplitude=3pt,mirror,raise=3pt}]
(3.6,-\pt) -- (4.4,-\pt) node[black,midway,yshift=-0.5cm,font=\scriptsize] {$0$};
\draw [thick,decorate,decoration={brace,amplitude=6pt,mirror,raise=3pt}]
(4.6,-\pt) -- (7.4,-\pt) node[black,midway,yshift=-0.5cm,font=\scriptsize] {$p+1$};
\end{tikzpicture}\ .
\]
There is one less $\mybox{0}$ and one more unpaired $\mybox{\overline{n}}$, so
\begin{align*}
\Psi(f_n^\TT R_j) &=  2q(\beta_{j,n})+(p-q+1)(\gamma_{j,n}) .
\end{align*}
On the other hand, in $S_n^c\bigl(\Psi(R_j)\bigr)$ the leftmost `(' corresponds to $\beta_{j,n}$ so
\[
f_n^{\Kp}\Psi(R_j)= 2q (\beta_{j,n}) + (p-q+1)(\gamma_{j,n}) = \Psi(f_n^\TT R_j).
\]

{\bf Case 4:  $\boldsymbol{p < q}$, $\boldsymbol{z=1}$ and $\boldsymbol{1\le j<n}$.} 
By the definition of $\Psi$,
\begin{align*}
\Psi(R_j) &= (q-p)(\beta_{j,n-1})+(2p+1)(\beta_{j,n}).
\end{align*}
By the definition of $f_n^\TT$, we have
\[
f_n^\TT R_j = 
\begin{tikzpicture}[baseline=-4,xscale=.75,yscale=.65]
\def\pt{.5}
\draw[-] (1-\pt,-\pt) -- (1-\pt,\pt) -- (7+\pt,\pt) -- (7+\pt,-\pt) -- cycle;
\foreach \x in {1.5,2.5,...,6.5}
{\draw[-] (\x,\pt) -- (\x,-\pt);}
\node at (1,0) {$n$};
\node at (2,0) {$\cdots$};
\node at (3,0) {$n$};
\node at (4,0) {$0$};
\node at (5,0) {$\overline{n}$};
\node at (6,0) {$\cdots$};
\node at (7,0) {$\overline{n}$};
\draw [thick,decorate,decoration={brace,amplitude=6pt,mirror,raise=3pt}]
(0.6,-\pt) -- (3.4,-\pt) node[black,midway,yshift=-0.5cm,font=\scriptsize] {$q$};
\draw [thick,decorate,decoration={brace,amplitude=3pt,mirror,raise=3pt}]
(3.6,-\pt) -- (4.4,-\pt) node[black,midway,yshift=-0.5cm,font=\scriptsize] {$0$};
\draw [thick,decorate,decoration={brace,amplitude=6pt,mirror,raise=3pt}]
(4.6,-\pt) -- (7.4,-\pt) node[black,midway,yshift=-0.5cm,font=\scriptsize] {$p+1$};
\end{tikzpicture}\ .
\]
There is one less $\mybox{0}$, one less unpaired  $\mybox{n}$, and one more $\left( \mybox{n},\mybox{\overline{n}} \right)$ pair, so
\begin{align*}
\Psi(f_n^\TT R_j) &=  (q-p-1)(\beta_{j,n-1})+(2p+2)(\beta_{j,n})
\end{align*}
On the other hand, in $S_n^c\bigl(\Psi(R_j)\bigr)$ the leftmost `(' corresponds to $\beta_{j,n-1}$ so
\[
f_n^{\Kp}\Psi(R_j)= (q-p-1)(\beta_{j,n-1})+(2p+2)(\beta_{j,n})= \Psi(f_n^\TT R_j).
\]

{\bf Case 5:  $\boldsymbol{j=n}$.}
The bracketing sequences are
\begin{align*}
\BR_n(R_n)&=\  )^{2p}\ )^z\ (^z\  (^{2q} , &
S_n\bigl(\Psi(R_n)\bigr)&=\
)^{c_{\beta_{n,n}}}.
\end{align*}
Since there is no `$($' in $S_n\bigl(\Psi(R_n)\bigr)$, $f_n^{\Kp}$ will add $(\beta_{n,n})$ to $\Psi(R_n)$.  

If $z=1$, then the leftmost `$($' in $\BR_n(R_n)$ comes from the $\mybox{0}$ so $f_n^{\TT}(R_i)$ sends the $\mybox{0}$ to $\mybox{\overline{n}}$. According to $\Psi$ we then have that 
\[
\Psi(f_n^{\TT}R_n)=\Psi(R_n)+(\beta_{n,n})=f_n^\Kp\Psi(R_n).
\]
If $z=0$, then the leftmost `$($' comes from an $\mybox{n}$ so $f_n^{\TT}R_n$ sends an $\mybox{n}$ to an $\mybox{0}$.  Again %by $\Psi$ we have that 
\[
\Psi(f_n^{\TT}R_n)=\Psi(R_n)+(\beta_{n,n})=f_n^\Kp\Psi(R_n).
\]

Now, consider $T$ to be of type $C_n$.  Assume row $j$ of $T$ has $p$ $\mybox{\overline{n}}$ boxes and $q$ $\mybox{n}$ boxes,
\[
R_j = 
\begin{tikzpicture}[baseline=-4,xscale=.75,yscale=.65]
\def\pt{.5}
\draw[-] (1-\pt,-\pt) -- (1-\pt,\pt) -- (6+\pt,\pt) -- (6+\pt,-\pt) -- cycle;
\foreach \x in {1.5,2.5,...,5.5}
{\draw[-] (\x,\pt) -- (\x,-\pt);}
\node at (1,0) {$n$};
\node at (2,0) {$\cdots$};
\node at (3,0) {$n$};
\node at (4,0) {$\overline{n}$};
\node at (5,0) {$\cdots$};
\node at (6,0) {$\overline{n}$};
\draw [thick,decorate,decoration={brace,amplitude=6pt,mirror,raise=3pt}]
(0.6,-\pt) -- (3.4,-\pt) node[black,midway,yshift=-0.5cm,font=\scriptsize] {$q$};
\draw [thick,decorate,decoration={brace,amplitude=6pt,mirror,raise=3pt}]
(3.6,-\pt) -- (6.4,-\pt) node[black,midway,yshift=-0.5cm,font=\scriptsize] {$p$};
\end{tikzpicture}\ .
\]
The bracketing sequences are:
\begin{align*}
\BR_n(R_j)&=\    )^p\  (^q\   ,&
S_n\bigl(\Psi(R_j)\bigr)&=\
)^{c_{\gamma_{j,j}}} (^{c_{\beta_{j,n-1}}} )^{c_{\gamma_{j,n}}} (^{c_{\gamma_{j,j}}}.
\end{align*}

{\bf Case 6: $\boldsymbol{p\geq q}$, and $\boldsymbol{1\le j<n}$.} 
By the definition of $\Psi$,
\begin{align*}
\Psi(R_j) &= q (\gamma_{j,j}) + (p-q)(\gamma_{j,n}).
\end{align*}
If $q=0$ then $f_n$ will act on the $\mybox{n}$ in $R_n$ of $T$ (see Case 9  for more details in this situation).  When $q>0$ by the definition of $f_n^\TT$ we have 
\[
f_n^\TT R_j = 
\begin{tikzpicture}[baseline=-4,xscale=.75,yscale=.65]
\def\pt{.5}
%\filldraw[color=gray!30] (1-\pt,-\pt) -- (1-\pt,\pt) -- (3.5,\pt) -- (3.5,-\pt) -- cycle;
\draw[-] (1-\pt,-\pt) -- (1-\pt,\pt) -- (6+\pt,\pt) -- (6+\pt,-\pt) -- cycle;
\foreach \x in {1.5,2.5,...,5.5}
{\draw[-] (\x,\pt) -- (\x,-\pt);}
\node at (1,0) {$n$};
\node at (2,0) {$\cdots$};
\node at (3,0) {$n$};
\node at (4,0) {$\overline{n}$};
\node at (5,0) {$\cdots$};
\node at (6,0) {$\overline{n}$};
\draw [thick,decorate,decoration={brace,amplitude=6pt,mirror,raise=3pt}]
(0.6,-\pt) -- (3.4,-\pt) node[black,midway,yshift=-0.5cm,font=\scriptsize] {$q-1$};
\draw [thick,decorate,decoration={brace,amplitude=6pt,mirror,raise=3pt}]
(3.6,-\pt) -- (6.4,-\pt) node[black,midway,yshift=-0.5cm,font=\scriptsize] {$p+1$};
\end{tikzpicture}\ .
\]
There are two more unpaired $\mybox{\overline{n}}$ and one less $\left(\mybox{n},\mybox{\overline n}\right)$, so
\begin{align*}
\Psi(f_n^\TT R_j) &= (q-1) (\gamma_{j,j}) + (p-q+2)(\gamma_{j,n}).
\end{align*}
On the other hand, in $S_n^c\bigl(\Psi(R_j)\bigr)$ the leftmost `(' corresponds to $\gamma_{j,j}$ so 
\begin{align*}
f_n^{\Kp}\Psi(R_j)&= (q-1) (\gamma_{j,j}) + (p-q+2)(\gamma_{j,n})= \Psi(f_n^\TT R_j).
\end{align*}

{\bf Case 7: $\boldsymbol{q > p+1}$, and $\boldsymbol{1 \le j<n}$.}
By the definition of $\Psi$,
\begin{align*}
\Psi(R_j) &= (q-p)(\beta_{j,n-1})+p (\gamma_{j,j}) .
\end{align*}
By the definition of $f_n^\TT$, we have 
\[
f_n^\TT R_j = 
\begin{tikzpicture}[baseline=-4,xscale=.75,yscale=.65]
\def\pt{.5}
\draw[-] (1-\pt,-\pt) -- (1-\pt,\pt) -- (6+\pt,\pt) -- (6+\pt,-\pt) -- cycle;
\foreach \x in {1.5,2.5,...,5.5}
{\draw[-] (\x,\pt) -- (\x,-\pt);}
\node at (1,0) {$n$};
\node at (2,0) {$\cdots$};
\node at (3,0) {$n$};
\node at (4,0) {$\overline{n}$};
\node at (5,0) {$\cdots$};
\node at (6,0) {$\overline{n}$};
\draw [thick,decorate,decoration={brace,amplitude=6pt,mirror,raise=3pt}]
(0.6,-\pt) -- (3.4,-\pt) node[black,midway,yshift=-0.5cm,font=\scriptsize] {$q-1$};
\draw [thick,decorate,decoration={brace,amplitude=6pt,mirror,raise=3pt}]
(3.6,-\pt) -- (6.4,-\pt) node[black,midway,yshift=-0.5cm,font=\scriptsize] {$p+1$};
\end{tikzpicture}\ .
\]
There is one more $\left(\mybox{n},\mybox{\overline n}\right)$ pair and two less unpaired $\mybox{n}$, so
\begin{align*}
\Psi(f_n^\TT R_j) &= (q-p-2) (\beta_{j,n-1}) + (p+1)(\gamma_{j,j}).
\end{align*}
On the other hand, in $S_n^c\bigl(\Psi(R_j)\bigr)$ the leftmost `(' corresponds to $\beta_{j,n-1}$ so
\[
f_n^{\Kp}\Psi(R_j)= (q-p-2) (\beta_{j,n-1}) + (p+1)(\gamma_{j,j})= \Psi(f_n^\TT R_j).
\]

{\bf Case 8: $\boldsymbol{q = p+1}$, and $\boldsymbol{j<n}$.}
By the definition of $\Psi$,
\begin{align*}
\Psi(R_j) &= (q-p)(\beta_{j,n-1})+p (\gamma_{j,j}) .
\end{align*}
By the definition of $f_n^\TT$, we have 
\[
f_n^\TT R_j = 
\begin{tikzpicture}[baseline=-4,xscale=.75,yscale=.65]
\def\pt{.5}
\draw[-] (1-\pt,-\pt) -- (1-\pt,\pt) -- (6+\pt,\pt) -- (6+\pt,-\pt) -- cycle;
\foreach \x in {1.5,2.5,...,5.5}
{\draw[-] (\x,\pt) -- (\x,-\pt);}
\node at (1,0) {$n$};
\node at (2,0) {$\cdots$};
\node at (3,0) {$n$};
\node at (4,0) {$\overline{n}$};
\node at (5,0) {$\cdots$};
\node at (6,0) {$\overline{n}$};
\draw [thick,decorate,decoration={brace,amplitude=6pt,mirror,raise=3pt}]
(0.6,-\pt) -- (3.4,-\pt) node[black,midway,yshift=-0.5cm,font=\scriptsize] {$q-1$};
\draw [thick,decorate,decoration={brace,amplitude=6pt,mirror,raise=3pt}]
(3.6,-\pt) -- (6.4,-\pt) node[black,midway,yshift=-0.5cm,font=\scriptsize] {$p+1$};
\end{tikzpicture}\ .
\]
Since $q-p=1$ the number of $\left(\mybox{n},\mybox{\overline n}\right)$ pairs is unchanged. There is one less $\mybox{n}$ and one more $\mybox{\overline{n}}$, so
\begin{align*}
\Psi(f_n^\TT R_j) &= (q-p-1) (\beta_{j,n-1}) + (\gamma_{j,n})+p(\gamma_{j,j}).
\end{align*}
On the other hand, in $S_n^c\bigl(\Psi(R_j)\bigr)$ the leftmost `(' corresponds to $\beta_{j,n-1}$ so
\[
f_n^{\Kp}\Psi(R_j)= (q-p-1) (\beta_{j,n-1})+ (\gamma_{j,n}) + p(\gamma_{j,j})= \Psi(f_n^\TT R_j).
\]

{\bf Case 9: $\boldsymbol{j=n}$.} 
The only positive root that can be in $\Psi(R_n)$ is $(\gamma_{n,n})$, so there is no `$($' in $S_n\bigl(\Psi(R_n)\bigr)$ and, by Definition \ref{def:KPops}, $f_n^{\Kp}$ adds a $(\gamma_{n,n})$. The leftmost `$($' in $\BR_n(T)$ comes from an $\mybox{n}$, so $f_n^{\TT}$ sends an $\mybox{n}$ to an $\mybox{\overline{n}}$.  Hence $\Psi(f_n^{\TT}R_n)=\Psi(R_n)+(\gamma_{n,n})=f_n^\Kp\Psi(R_n)$.
\end{proof}

\begin{proof}[Proof of Theorem \ref{isom_BC}] It suffices to show that for all $i$ we have $f_i^\Kp\Psi(T) = \Psi(f_i^{\TT}T)$.
By the definition of the bracketing sequences and of $\Psi$, we have
\begin{equation*}
\BR_i(T) \text{  factors as }
\BR_i(R_1) \BR_i(R_2) \cdots \BR_i(R_{n}), \text{ and}
\end{equation*}
\begin{equation*} S_i\bigl(\Psi(T)\bigr) \text{ factors as }
S_i\bigl(\Psi(R_1)\bigr) S_i\bigl(\Psi(R_2)\bigr) \cdots S_i\bigl(\Psi(R_{n})\bigr).
\end{equation*}
\noindent Suppose that the leftmost `$($' in $\BR_i^c(T)$ comes from row $R_j$. There will always be an uncanceled bracket coming from row $i$ so we may assume $j\leq i$.
By applying Lemma \ref{lem:onerow_i} or Lemma \ref{lem:onerow_n} to each $R_j$, the leftmost `(' in $S_i\bigl(\Psi(T)\bigr)$ comes from $S_i\bigl(\Psi(R_j)\bigr)$, and also $\Psi \bigl(f_i^\TT R_j\bigr) = f_i^{\Kp} \Psi(R_j)$. The result follows. 
\end{proof}

\section{Stack notation}
This work extends results from \cite{CT15, SST2} in types $A_n$ and $D_n$ to types $B_n$ and $C_n$. The type $A_n$ result can be described using the multisegments from \cite{JL09,LTV99,Z80} which are a diagrammatic notation that makes the crystal structure apparent.  In \cite{SST2} this was extended to type $D_n$ by introducing a \emph{stack} notation for Kostant partitions in which the crystal structure can easily be read off. We now define a  similar \emph{stack} notation for types $B_n$ and $C_n$. 

In type $B_n$ we associate positive roots to ``stacks" with 
\[ 
\beta_{j,k} = \stack{k\\\mdots\\j}\ , \qquad 
\gamma_{\ell,m} = \stack{ m \\ \mdots \\ n-1 \\  n \ n \\ n-1 \\ \mdots \\ \ell }\ ,
\]
for $1\leq j \leq k \leq n$ and $1\leq \ell < m \leq n$.

In type $C_n$ we associate positive roots to ``stacks" with
\[
\beta_{j,k} = \stack{k\\\mdots\\j}\ , \qquad 
\gamma_{\ell,m} = \stack{ m \\ \mdots \\ n-1 \\  n \\ n-1 \\ \mdots \\ \ell }\ , \qquad
\gamma_{h,h} = \stack{ n \\ n-1 \ n-1 \\ \mdots \\ h \ h }\ ,
\]
for $1\leq j \leq k < n$, $1\leq \ell < m \leq n$, and $ 1\leq h \leq n$.

Then the sequences of roots $\Phi_i$ from Definition \ref{def:KPisignature} are exactly those positive roots where we can either add or remove an $i$ from the top  of the corresponding stack and still have either a valid stack, an empty stack, or in type $C_n$ with $i=n$ where we have two valid stacks side by side. Once the stacks are ordered as in Definition \ref{def:KPisignature}, the bracketing sequence is created by placing a left bracket for each $i$ that can be added to the top of a stack, and a right bracket for each $i$ that can be removed from the top.  Note that if both happen then the root corresponding to the stack appears twice in Definition \ref{def:KPisignature}, in which case the `)' is placed over the left copy and the `(' over the right copy. %, and. 
If there is a leftmost uncanceled `$($' the crystal operator $f_i$ adds an $i$ to the top of the corresponding stack (or, in the case of $i=n$ in type $C_n$, may combine two stacks together and attach an $n$ at the top).  Otherwise $f_i$ creates a new stack consisting of just $i$. 
%{\color{blue} correct for stacks that appear with both types of bracket?}
%{\color{green} Modified the new remark and the first sentence (just to avoid extra `(' :) ).  On the remark, I specified root because we are referring to the KP definition, and I commented out the end of the sentence because I think it is redundant.
%
%As for stacks that appear with both types, I think you mean the only stacks that gives roots when both adding and removing $i$ from the top, which are the $C_n$ $i=n$ stacks with the two $n$ side-by-side and we handle those separately.  Can you confirm this ben?}

\begin{Remark}
Being able to add or remove an $i$ from the top of a stack is different from being able to add or remove an $\alpha_i$ from the corresponding root. For instance, in type $B_3$, if $\beta= \alpha_1+\alpha_2+2\alpha_3$, then $\beta-\alpha_1$ is a root, but there is no 1 at the top of the stack corresponding to $\beta$, so $\beta$ is not in $\Phi_1^B$. Similarly, in type $C_3$, although
$ \stack{ 2 \\ 3 \\ 2 \\ 1 }$ is a stack, $\alpha_1+2\alpha_2+\alpha_3$ is not in $\Phi_1^C$ because the stack for $2\alpha_1+2\alpha_2+\alpha_3$ is $ \stack{  3 \\ 2\ 2 \\ 1\ 1}$, not $ \stack{  1\\ 2 \\ 3 \\ 2 \\ 1 }$. 
\end{Remark}

\begin{Example}\label{ex:StackCnormal}
Consider type $C_3$ and $\bm\alpha\in \Kp(\infty)$ given in stack notation by
	\[
	\bm\alpha = \stack{2 \\ 1} \ \stack{2 \\ 1} \ \stack{2 \\ 1} \ \stack{3 \\ 2 \\ 1} \ \stack{3 \\ 2 \\ 1} \ \stack{3 \\2 \ 2 \\ 1 \ 1} \ \stack{3 \\2 \ 2 \\ 1 \ 1} \ \stack{3 \\2\ 2} \ \stack{2 \\ 3} \ \stack{3}.
	\]
	The corresponding $3$-signature is 
	\[
	\begin{array}{cccccccccccccccc}
	&  \stack{3\\2\ 2\\1\ 1} &  \stack{3\\2\ 2\\1\ 1}  & \stack{2 \\ 1 } & \stack{2 \\ 1 } & \stack{2 \\ 1 } & \stack{3 \\ 2 \\ 1} & \stack{3 \\ 2 \\ 1}&  \stack{3\\2\ 2\\1\ 1} & \stack{3\\2\ 2\\1\ 1} & \stack{3\\2\ 2} & \stack{3 \\ 2} & \stack{3\\2\ 2} & \stack{3}\\
	S_3(\bm\alpha)= & ) & ) & ( &\color{red}{(}&\color{red}{(} & \color{red}{)} &\color{red}{)} & \color{red}{(} & \color{red}{(} & \color{red}{)} & \color{red}{)} & \color{red}{(} & \color{red}{)} \\ 
	S_3^c(\bm\alpha)=& )& )& \color{blue}{(} & & & & & & & & & & &.
	\end{array}
	\] 
	Thus the action of $f_3$ on $\bm\alpha$ adds a 3 to top of a $\stack{2 \\ 1}$. This gives
	\[
	f_3\bm\alpha = \stack{2 \\ 1} \ \stack{2 \\ 1} \ \stack{3 \\ 2 \\ 1} \ \stack{3 \\ 2 \\ 1} \ \stack{3 \\ 2 \\ 1} \ \stack{3 \\2 \ 2 \\ 1 \ 1} \ \stack{3 \\2 \ 2 \\ 1 \ 1} \ \stack{3 \\2\ 2} \ \stack{2 \\ 3} \ \stack{3}.
	\]
\end{Example}

\begin{Example}\label{ex:StackCsplit}
Consider type $C_3$ and $\bm\alpha$ as in Example \ref{ex:KPopsC_3f2e}. In stack notation, %  Rewriting the Kostant partition $\bm\alpha$ by way of stack notation results in 
\[
\bm\alpha = \stack{2 \\ 1} \ \stack{2 \\ 1} \ \stack{3 \\ 2 \\ 1} \ \stack{3 \\ 2 \\ 1} \ \stack{3 \\2 \ 2 \\ 1 \ 1} \ \stack{3 \\2 \ 2 \\ 1 \ 1} \ \stack{3 \\2 \ 2 \\ 1 \ 1} \ \stack{3 \\2\ 2} \ \stack{3 \\ 2} \ \stack{3}.
\]
Recalculating the $3$-signature using stack notation gives
\[
\begin{array}{ccccccccccccccccc}
&  \stack{3 \\2 \ 2 \\ 1 \ 1} &  \stack{3 \\2 \ 2 \\ 1 \ 1} &  \stack{3 \\2 \ 2 \\ 1 \ 1} & \stack{2 \\ 1 } & \stack{2 \\ 1 } & \stack{3 \\ 2 \\ 1} & \stack{3 \\ 2 \\ 1}&  \stack{3 \\2 \ 2 \\ 1 \ 1}& \stack{3 \\2 \ 2 \\ 1 \ 1} & \stack{3 \\2 \ 2 \\ 1 \ 1}& \stack{3 \\2\ 2} & \stack{3 \\ 2} & \stack{3 \\2\ 2} & \stack{3}\\[10pt]
S_3(\bm\alpha)= & ) & ) & ) &\color{red}{(}&\color{red}{(} & \color{red}{)} &\color{red}{)} & ( & \color{red}{(} & \color{red}{(} & \color{red}{)} & \color{red}{)} & \color{red}{(} & \color{red}{)} \\ 
S_3^c(\bm\alpha)=& )& )& ) & & & &  &\color{blue}{(} & & & & & & &.
\end{array}
\] 
Since the leftmost `$($' comes from a  $ \stack{3 \\2 \ 2 \\ 1 \ 1}$, we should add a $3$ to the top of this stack, which gives $ \stack{3 \ 3\\2 \ 2 \\ 1 \ 1}$. That is not the stack of a single root, but should be thought of as two copies of $ \stack{3 \\ 2 \\ 1}$, which is the stack of a root. The result is
\[
f_3\bm\alpha = \stack{2 \\ 1} \ \stack{2 \\ 1} \ \stack{3 \\ 2 \\ 1} \ \stack{3 \\ 2 \\ 1} \  \stack{3 \\ 2 \\ 1} \  \stack{3 \\ 2 \\ 1} \ \stack{3 \\2 \ 2 \\ 1 \ 1} \ \stack{3 \\2 \ 2 \\ 1 \ 1}\ \stack{3 \\2\ 2} \ \stack{3 \\ 2} \ \stack{3}\ .
\]
\end{Example}

\bibliographystyle{amsplain}
\bibliography{KP-crystal}{}

\end{document}